\newcommand{\Cn}{ \mathds{C}^n }
\newcommand{\CO} {\Cn\setminus\Omega}
\newcommand{\Om}{\Omega = \left\{z\in\Cn : \rho(z)<0 \right\}}
\newcommand{\dom}{ {\partial\Omega} }
\newcommand{\OO}{\Omega_\eps\setminus\Omega}
\renewcommand{\O}{\Omega}
\newcommand{\eps}{\varepsilon}
\newcommand{\C}{\mathbb{C}}
\newcommand{\R}{\mathbb{R}}
\newcommand{\N}{\mathbb{N}}
\newcommand{\RE}{\text{\rm{Re}}}
\newcommand{\pr}{ \text{\rm{pr}}}
\renewcommand{\P}{\mathcal{P}}
\newcommand{\f}{{\mathbf{f}}}
\newcommand{\cn}{\frac{1}{(2\pi i)^n}}
\newcommand{\dbar}{\bar\partial}
\newcommand{\suml}{\sum\limits}
\newcommand{\grad}[1]{ \text{\rm{grad}}\ {#1} }
\newcommand{\dist}[2]{ \text{\rm{dist}} (#1,\ #2)}
\newcommand{\supp}[1]{ \text{\rm{supp}}\ {#1} }
\newcommand{\scp}[2]{ \left\langle #1,\ #2  \right\rangle }
\newcommand{\norm}[1]{\left\lVert #1 \right\rVert}
\newcommand{\abs}[1]{\left\lvert #1 \right\rvert}
\begin{document}

\title{Constructive description of analytic Besov spaces in strictly pseudoconvex domains.\thanks{The work is supported by Russian Science Foundation Grant 19-11-00058.}
}
%\subtitle{Do you have a subtitle?\\ If so, write it here}

\titlerunning{Analytic Besov spaces}        % if too long for running head

\author{Aleksandr Rotkevich
}

%\authorrunning{Short form of author list} % if too long for running head

\institute{A. Rotkevich \at
              St. Petersburg University \\
              Tel.: +7-952-2433484\\
              \email{rotkevichas@gmail.com}
}

\date{Received: date / Accepted: date}
% The correct dates will be entered by the editor

\maketitle

\begin{abstract}
We use the method of pseudoanalytic continuation to obtain a characterization of spaces of holomorphic functions with boundary values in Besov spaces in terms of polynomial approximations.
\keywords{Polynomial approximation \and Besov spaces \and pseudoconvex domains \and pseudoanalytic continuation }
% \PACS{PACS code1 \and PACS code2 \and more}
% \subclass{MSC code1 \and MSC code2 \and more}
\end{abstract}

\section{Introduction}
\label{intro}
The constructive description of the smoothness of functions in terms of polynomial approximations is a classical problem that started by D. Jackson and S.Bernstein results. In 1984 E. M. Dynkin gave a constructive characterization of holomorphic Besov spaces in simply connected domains in $\C$ with "good" boundary. We continue the research (see \cite{R13,R18_1,R18_2,Sh89}) devoted to the description of spaces of functions of several complex variables and consider spaces of holomorphic functions on strictly pseudoconvex domains, which boundary values are in Besov space. The main obstacle for polynomial approximations in strictly pseudoconvex is that polynomials are not always dense in space of holomorphic functions that are continuous up to the boundary. We restrict our consideration to strictly pseudoconvex Runge domains.
\begin{definition}
The domain $\O\subset\Cn$ is Runge domain if for every function $f$ holomorphic in the neighbourhood of $\overline\O$ and every $\eps>0$ there exist a polynomial $P$ for which $\abs{f(z)-P(z)}<\eps,\ z\in\overline\O.$ 
\end{definition}
The condition that $\O$ is Runge is necessary and sufficient to obtain our results (see \cite{Sh89}).

\section{Main notations and definitions.}
\label{notations}

Let $\Cn$ be the space of $n$ complex variables, $n\geq 1,$ $z =
(z_1,\ldots, z_n),\ z_j = x_j + i y_j;$
\begin{equation*}
\partial_j f =\frac{\partial f}{\partial z_j} = \frac{1}{2}\left( \frac{\partial f}{\partial x_j} - i \frac{\partial f}{\partial y_j}\right), \quad \bar\partial_j f = \frac{\partial f}{\partial\bar{z}_j} = \frac{1}{2}\left( \frac{\partial f}{\partial x_j} + i \frac{\partial f}{\partial y_j}\right);
\end{equation*}

\[
\partial f = \suml_{k=1}^{n} \frac{\partial f}{\partial z_k} dz_k,\quad \bar{\partial} f = \suml_{k=1}^{n} \frac{\partial f}{\partial\bar{z}_k} d\bar{z}_k,\quad df=\partial f+  \bar{\partial} f;
\]

\[
\abs{\bar\partial f} = \abs{\bar\partial_1 f} + \ldots+\abs{\bar\partial_n f}
\]
We use the notation 
$
\scp{\eta}{w} = \suml_{k=1}^{n} \eta_k w_k
$
to indicate the action of the differential form $\eta=\suml_{k=1}^{n}\eta_k dz_k$ of type $(1,0)$ on the vector
$w\in\Cn.$

Let $\Om$ be a strictly pseudoconvex domain with a $C^2$-smooth defining function $\rho:\Cn\to\R.$ We also consider a family of domains $\O_t=\left\{z\in\Cn : \rho(z)<t \right\},$ where t is small real parameter, and a bijection $\Phi_t:\dom\to\dom_t$ given by the exponential map of a normal vector field. This allows us to define the projection $\Psi:\O_\eps\setminus\O_{-\eps}\to\dom$ by $\Psi(\xi)=\Phi_{\rho(\xi)}^{-1}(\xi).$

For $\xi\in\dom_r = \left\{\xi\in\Cn : \rho(\xi)=r
\right\}$ we define the tangent space
\[
 T_\xi^{\R} = \left\{ z\in\Cn : \RE \scp{\partial{\rho}(\xi)}{\xi-z} = 0  \right\},
\]
the complex tangent space
\[
T_\xi = \left\{ z\in\Cn : \scp{\partial{\rho}(\xi)}{\xi-z} = 0  \right\}  
\]
and the complex normal vector 
\begin{equation}
 n(\xi)=\abs{\bar\partial\rho(\xi)}^{-1}\left(\bar\partial_1\rho(\xi),\ldots,\bar\partial_n\rho(\xi)\right).
\end{equation}

We denote the space of holomorphic functions as $H(\O)$ and consider
the Hardy space (see~\cite{S76,FS72}) 
\[
H^p(\O):=\left\{f\in
H(\O):\ \norm{f}_{H^p(\O)}=\sup\limits_{-\eps<t<0} \norm{f}_{L^p(\dom_t)} <\infty\right\},
\]
where Lebesgue spaces $L^p(\dom_t)$ are defined by induced on the boundary of $\O_t$ Lebesgue measure $d\sigma_t.$ We also denote $d\sigma = d\sigma_0.$ Notice that every function $f\in H^p(\O)$ has nontangential boundary limit $f^*\in L^p(\dom)$ and $\norm{f^*}_{H^p(\O)}\asymp\norm{f^*}_{L^p(\dom)}.$ 

Throughout this paper we use notations $\lesssim,\ \asymp.$  We write
$f\lesssim g$ if $f\leq c g$ for some constant $c>0,$ that doesn't
depend on main arguments of functions $f$ and $g$ and usually depend
only on the dimension $n$ and the domain $\O.$ Also $f\asymp g$ if $c^{-1}
g\leq f\leq c g$ for some $c>1.$ We denote the
Lebesgue measure in $\Cn$ as~$d\mu.$

\section{Leray-Koppelman formula and pseudoanalytic continuation}
 In the context of the theory of several complex variables there is no canonical reproducing formula, however we can use the Leray theorem that allows us to construct holomorphic reproducing kernels. The following lemma allows us to consider polynomial Leray maps for strictly pseudoconvex Runge domains.
 
 \begin{lemma}[Shirokov \cite{Sh89}]\label{lm:Sh}
Let $\Om$ be a strictly pseudoconvex Runge domain with the $C^2$-smooth defining function $\rho.$ Then there exist functions $v(\xi,z),\  q(\xi,z),\ w_j(\xi,z),$ constants $\eps,\beta,\delta,s>0$ and a domain $G\subset\C$ such that
\begin{enumerate}
 \item $v(\xi,z),\ q(\xi,z),\ w_j(\xi,z)$ are polynomials in $z$ and continuous in $\xi\in{\overline{\Omega}_\eps\setminus\Omega}.$
 \item $v(\xi,z)= \sum\limits_{j=1}^n w_j(\xi,z)(\xi_j-z_j)=\scp{w(\xi,z)}{\xi-z}.$
 \item $w_j(\xi,z)=\partial_j\rho(\xi)+\sum\limits_{k=1}^n P_{kj}(\xi,z)(\xi_k-z_k)$ where $P_{kj}$ are polynomials in $z.$
 \item For $\xi\in{\overline{\Omega}_\eps\setminus\Omega}$ and $z\in\overline{\O}$ 
  \begin{align*}
&  \abs{v(\xi,z)}\geq \rho(\xi)-\rho(z)+\beta\abs{\xi-z}^2,\ \abs{\xi-z}<\delta; \\
&   \abs{v(\xi,z)}\geq s>0,\ \abs{\xi-z}>\delta.
  \end{align*}
 \item For every $\xi\in{\overline{\Omega}_\eps\setminus\Omega}$ and $z\in\O$ the point $\lambda(\xi,z)=v(\xi,z)q(\xi,z)$ lies in the domain $G$ and $G$ is simply connected region with $C^2$-smooth boundary, which is tangent to the $y-$axis at the origin.
\end{enumerate}
\end{lemma}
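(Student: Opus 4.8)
The plan is to manufacture $v$ from the Levi polynomial, to globalize it using the Runge hypothesis, and then to choose $q$ and $G$ so that the product $\lambda=vq$ lands in one fixed parabola-shaped region. First I would normalize the defining function: replacing $\rho$ by $e^{K\rho}-1$ for $K$ large, I may assume $\rho$ is strictly plurisubharmonic on a neighbourhood of $\dom$, so that $\suml_{j,k=1}^n\partial_j\bar\partial_k\rho(\xi)\,u_j\bar u_k\geq c\abs{u}^2$ there. I then introduce the Levi polynomial
\[
F(\xi,z)=\suml_{j=1}^n\partial_j\rho(\xi)(\xi_j-z_j)-\tfrac12\suml_{j,k=1}^n\partial_j\partial_k\rho(\xi)(\xi_j-z_j)(\xi_k-z_k),
\]
a polynomial of degree two in $z$ whose coefficients are continuous in $\xi$ since $\rho\in C^2$. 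A second order Taylor expansion of $\rho$ together with strict plurisubharmonicity gives $2\RE F(\xi,z)\geq\rho(\xi)-\rho(z)+c\abs{\xi-z}^2$ for $\abs{\xi-z}<\delta$, hence $\abs F\gtrsim\rho(\xi)-\rho(z)+\abs{\xi-z}^2$, matching the form of the near-diagonal part of item (4) for $v=F$. Writing $F=\suml_{j=1}^n w_j^0(\xi_j-z_j)$ with $w_j^0=\partial_j\rho(\xi)-\tfrac12\suml_{k=1}^n\partial_j\partial_k\rho(\xi)(\xi_k-z_k)$ supplies the Hefer form of items (2) and (3), with $P_{kj}=-\tfrac12\partial_j\partial_k\rho(\xi)$, all polynomial in $z$.

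The Levi polynomial only controls $v$ near the diagonal, and the real work is to globalize while keeping both the Hefer structure and polynomiality; this is where the Runge hypothesis enters. Following Henkin's construction of a global support function, I would solve an auxiliary $\dbar_z$-equation to modify the local frame $w_j^0$ into a frame $\widetilde w_j(\xi,z)$, holomorphic in $z$ on a neighbourhood of $\overline\O$, that agrees with $w_j^0$ to high order along the diagonal and for which $\widetilde v=\scp{\widetilde w}{\xi-z}$ is non-vanishing on $\overline\O$ away from $z=\xi$, with $\RE\widetilde v\gtrsim\rho(\xi)-\rho(z)+\abs{\xi-z}^2$ near the diagonal and $\abs{\widetilde v}\geq s>0$ for $\abs{\xi-z}>\delta$. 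The frame form makes $\widetilde v$ vanish at $z=\xi$ automatically and keeps the parabolic bound, since $\widetilde w\approx w^0$ there. I would then invoke the Runge property — approximating each $\widetilde w_j(\xi,\cdot)$ uniformly on a slightly larger strictly pseudoconvex Runge domain $\O_t$, $t>0$, which by the Cauchy estimates upgrades to $C^2$ convergence on $\overline\O$ — to replace $\widetilde w_j$ by polynomials $w_j$ in $z$ matching the $1$-jet along the diagonal, so that $w_j(\xi,\xi)=\partial_j\rho(\xi)$. Then $v=\scp{w}{\xi-z}$ is a polynomial in $z$ realizing items (2) and (3) by construction, and a close enough approximation inherits the far bound $\abs v\geq s$ and, since $v-\widetilde v=O(\abs{\xi-z}^2)$ with small constant near the diagonal while $\RE\widetilde v\gtrsim\abs{\xi-z}^2$, the near-diagonal bound of item (4).

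It remains to construct $q$ and $G$. The near-diagonal bound gives $\RE v\gtrsim\abs{\xi-z}^2\asymp\abs v^2$ as $z\to\xi$, so the image $\{v(\xi,z):z\in\O\}$ approaches the origin inside a parabolic sector tangent to the $y$-axis, while for $\abs{\xi-z}>\delta$ it is confined to an annulus $s\leq\abs v\leq M$. The factor $q$ is introduced to force this a priori $\xi$-dependent and possibly non-simply-connected image into a single fixed region: I would take the polynomial $q(\xi,\cdot)$ non-vanishing on $\overline\O$ with argument chosen so that $\arg(vq)$ stays in a fixed proper subinterval of $(-\pi,\pi)$, so that $\lambda=vq$ omits a ray and its image lies uniformly in a simply connected, parabola-like region. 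I then take $G$ to be a fixed simply connected $C^2$ domain containing all these images and coinciding near $0$ with a rightward parabola $\{x>c\,y^2\}$, which is exactly the tangency to the $y$-axis demanded in item (5). Producing such a non-vanishing polynomial $q$ with globally controlled argument again rests on Runge-approximating a suitable logarithmic corrector; reconciling this uniform control of $\arg(vq)$ with the requirement that $q$ be a polynomial — together with the jet-preserving globalization of $v$ above — is the step I expect to be the main obstacle.
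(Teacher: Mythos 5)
First, a point of reference: the paper does not prove this lemma at all --- it is imported verbatim from Shirokov \cite{Sh89} --- so your attempt can only be measured against the construction in that reference. For items (1)--(4) your outline does follow that construction in its essentials: normalize $\rho$ to be strictly plurisubharmonic, start from the Levi polynomial and its Hefer decomposition, globalize \`a la Henkin--Ramirez by a $\dbar_z$-correction to obtain a support function holomorphic in $z$ near $\overline\O$, and then use the Runge property to pass to polynomials while pinning the value at $z=\xi$ to $\partial_j\rho(\xi)$ so that the quadratic lower bound and the structure of item (3) survive; the jet-matching device you use to preserve the near-diagonal estimate is correct. One step you assert but do not justify is uniformity in $\xi$: the lemma needs polynomials of a single bounded degree whose coefficients depend continuously on $\xi\in\overline{\Omega}_\eps\setminus\Omega$, whereas Runge approximation is applied to one function at a time. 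This is repaired by a routine compactness argument (the family $\{\widetilde w_j(\xi,\cdot)\}_\xi$ is compact in the space of functions holomorphic on a fixed neighbourhood of $\overline\O$, so finitely many approximants glued by a continuous partition of unity in the parameter $\xi$ suffice), but it should be said.

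The genuine gap is item (5), which you yourself flag and then leave open. The claim that a \emph{polynomial} $q(\xi,\cdot)$, zero-free on $\overline\O$, can be chosen so that $\arg\bigl(v(\xi,z)q(\xi,z)\bigr)$ stays in a fixed proper subinterval of $(-\pi,\pi)$ is exactly what has to be proved, and nothing in your sketch produces it: a continuous branch of $\arg v(\xi,\cdot)$ on $\overline\O$ need not exist for topological reasons (strictly pseudoconvex domains need not be simply connected), and even when it does, a continuous corrector is useless because $q$ must be holomorphic. The missing ingredient is a second, independent use of the Runge hypothesis: $\overline\O$ is polynomially convex (Runge plus holomorphic convexity of the closure), hence by the Oka--Weil/Arens--Royden circle of ideas every zero-free function holomorphic near $\overline\O$ --- in particular $v(\xi,\cdot)$ for $\rho(\xi)>0$ --- admits a holomorphic logarithm near $\overline\O$, uniformly in $\xi$ by compactness; $q(\xi,\cdot)$ is then a polynomial approximation of $e^{-g(\xi,\cdot)}$ with $g$ the part of that logarithm supported off the diagonal, patched with the parabolic control of $\arg v$ near $z=\xi$ coming from item (4). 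One must also handle the degeneration as $\rho(\xi)\to0$, where $v(\xi,\cdot)$ acquires a boundary zero at $z=\xi$ and the logarithm blows up; this is precisely why $G$ is only required to be tangent to the $y$-axis at the origin rather than to contain $0$ in its interior. Since item (5) is the input on which Lemma~\ref{lm:KGlob} (polynomial approximation of the kernel through the conformal map of $G$) rests, it cannot be waved through: as it stands your argument establishes a polynomial Leray map with the estimates of item (4) but not the lemma.
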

By Leray-Koppelman formula for every $f\in H^1(\O)$  we have
\begin{equation} \label{eq:CLF}
 f(z) = K f(z) =  \int\limits_{\dom}  \frac{f^*(\xi) \omega(\xi,z)}{\scp{w(\xi,z)}{\xi-z}^n},\ z\in\Omega,
\end{equation}
where $\omega(\xi,z) =\cn w(\xi,z)\wedge \left(\bar\partial_\xi w(\xi,z)\right)^{n-1}.$

\begin{definition} The function $f\in C^1(\O_\eps\setminus\overline\O)$ with $\supp\f\in\O_\eps$ is a pseudoanalytic continuation of function $f\in H^1(\O)$ if nontangential boundary values of $f$ and $\f$ coinside on $\dom.$
\end{definition}
Applying Stoke's theorem to Leray-Koppelman integral and pseudoanalytic continuation we have (see \cite{R18_1} for details)
\begin{equation} \label{CLF_co}
f(z) = \int\limits_{\O_\eps\setminus\O} \frac{ \bar\partial \f(\xi)\wedge \omega(\xi,z)}{\scp{w(\xi,z)}{\xi-z}^n},\ z\in\O,
\end{equation}
since
\[
d_\xi\frac{ \omega(\xi,z)}{\scp{w(\xi,z)}{\xi-z}^n}  = 0,\quad z\in\O,\ \xi\in\OO.
\]
We denote the kernel by $K(\xi,z)=\scp{w(\xi,z)}{\xi-z}^{-n}.$

%\section{Estimates of the reproducing kernel} \label{Est}
The function $d(\xi,z)=\abs{v(\xi,z)}=\abs{\scp{w(\xi,z)}{\xi-z}}$ defines on $\dom$ a quasimetric with constant $A>0$ such that for $z,z_0,\xi\in\dom$
\begin{equation}\label{eq:QM}
d(z,z_0)\leq A(d(z,\xi)+d(\xi,z_0)),\ v(\xi,z)\leq Av(z,\xi).\end{equation}
If $B(z,\delta)=\{\xi\in\dom:d(\xi,z)<\delta\}$ is a quasiball with respect to $d$ then $\sigma(B(z,\delta))\lesssim \delta^n.$ Adapting ideas of L. Lanzani and E.M. Stein in \cite{LS13} we have the following estimate.
\begin{lemma}
$d(\xi,z)\asymp d(\Psi(\xi),z)+\rho(\xi),\ \xi\in\OO,\ z\in\dom.$
\end{lemma}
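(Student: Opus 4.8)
The plan is to replace the quasimetric by an explicit \emph{model} and then compare the model at $\xi$ with the model at the projected point $\xi':=\Psi(\xi)$. I first dispose of the far regime $\abs{\xi-z}\geq\delta$: here part~(4) of Lemma~\ref{lm:Sh} gives $d(\xi,z)=\abs{v(\xi,z)}\geq s$, while continuity of $v$ and compactness give $d(\xi,z)\lesssim 1$, so $d(\xi,z)\asymp 1$; for the right-hand side, either $\abs{\xi'-z}\geq\delta/2$ and then $d(\xi',z)\gtrsim s$, or $\abs{\xi'-z}<\delta/2$ and then $\abs{\xi-\xi'}>\delta/2$, whence $\rho(\xi)\asymp\abs{\xi-\xi'}\gtrsim 1$; in either case $d(\xi',z)+\rho(\xi)\asymp 1$. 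Two elementary geometric facts are used repeatedly and would be recorded first: since $\rho\in C^2$ with $\abs{\grad{\rho}}\asymp 1$ near $\dom$ and $\xi'$ is the image of $\xi$ under the normal flow, $\abs{\xi-\xi'}\asymp\rho(\xi)$; and since $\rho(z)=0\leq\rho(\xi)$ gives $\rho(\xi)\lesssim\abs{\xi-z}$, the triangle inequality yields $\abs{\xi-z}\asymp\abs{\xi'-z}+\rho(\xi)$ for $\xi\in\OO,\ z\in\dom$.

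The core step is the model estimate: for every $\eta\in\overline{\Omega}_\eps\setminus\Omega$ and $z\in\dom$ with $\abs{\eta-z}<\delta$,
\begin{equation}\label{eq:model}
d(\eta,z)\asymp \rho(\eta)+\abs{\eta-z}^2+\abs{\IM\scp{\partial\rho(\eta)}{\eta-z}}.
\end{equation}
To prove it I write $v(\eta,z)=\scp{\partial\rho(\eta)}{\eta-z}+Q(\eta,z)$ with $Q(\eta,z)=\suml_{j,k}P_{kj}(\eta,z)(\eta_k-z_k)(\eta_j-z_j)$, so that $\abs{Q(\eta,z)}\lesssim\abs{\eta-z}^2$. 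The lower bound $d(\eta,z)\geq\rho(\eta)+\beta\abs{\eta-z}^2$ is exactly part~(4) of Lemma~\ref{lm:Sh} (recall $\rho(z)=0$), which controls the first two terms on the right of \eqref{eq:model}; the tangential term is controlled by $\abs{\IM\scp{\partial\rho(\eta)}{\eta-z}}\leq\abs{v(\eta,z)}+\abs{Q(\eta,z)}\lesssim d(\eta,z)$. Conversely, the Taylor expansion of $\rho$ at $\eta$ with $\rho(z)=0$ gives $\RE\scp{\partial\rho(\eta)}{\eta-z}=\tfrac12\rho(\eta)+O(\abs{\eta-z}^2)$, so $\abs{\RE v(\eta,z)}\lesssim\rho(\eta)+\abs{\eta-z}^2$ and $\abs{\IM v(\eta,z)}\lesssim\abs{\IM\scp{\partial\rho(\eta)}{\eta-z}}+\abs{\eta-z}^2$, which together give $d(\eta,z)\lesssim$ the right-hand side of \eqref{eq:model}.

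It then remains to compare \eqref{eq:model} at $\eta=\xi$ and at $\eta=\xi'$ (where $\rho(\xi')=0$). In the near regime $\abs{\xi'-z}$ is bounded, so $\bigl|\abs{\xi-z}^2-\abs{\xi'-z}^2\bigr|\leq\abs{\xi-\xi'}\,(\abs{\xi-z}+\abs{\xi'-z})\lesssim\rho(\xi)$ and the square terms agree after absorbing $\rho(\xi)$. For the tangential terms I expand
\[
\scp{\partial\rho(\xi)}{\xi-z}-\scp{\partial\rho(\xi')}{\xi'-z}=\scp{\partial\rho(\xi)-\partial\rho(\xi')}{\xi-z}+\scp{\partial\rho(\xi')}{\xi-\xi'},
\]
and bound both summands: the first by $\lesssim\abs{\xi-\xi'}\,\abs{\xi-z}\lesssim\rho(\xi)$ using that $\partial\rho$ is Lipschitz, the second by $\lesssim\abs{\xi-\xi'}\lesssim\rho(\xi)$. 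Hence $\abs{\IM\scp{\partial\rho(\xi)}{\xi-z}}=\abs{\IM\scp{\partial\rho(\xi')}{\xi'-z}}+O(\rho(\xi))$, and adding $\rho(\xi)$ to \eqref{eq:model} at the two points makes the models comparable. Chaining $d(\xi,z)\asymp$ [model at $\xi$] $\asymp\rho(\xi)+$ [model at $\xi'$] $\asymp\rho(\xi)+d(\xi',z)$ finishes the proof.

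The genuinely delicate point is the bookkeeping in the comparison step: one must be sure that every discrepancy between the two models is $O(\rho(\xi))$ and hence absorbed by the $+\rho(\xi)$ summand, rather than by $d(\Psi(\xi),z)$, which can be arbitrarily small. This is where the two preliminary geometric facts — $\abs{\xi-\Psi(\xi)}\asymp\rho(\xi)$ and $\abs{\xi-z}\asymp\abs{\Psi(\xi)-z}+\rho(\xi)$, both resting on the normal-flow description of $\Psi$ — do the essential work; once they are in hand, the remaining estimates are Taylor expansions of $\rho$ and triangle inequalities.
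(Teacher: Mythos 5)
Your proof is correct; the paper in fact states this lemma without proof, only attributing the idea to Lanzani and Stein, and your argument --- splitting $v(\eta,z)$ into $\scp{\partial\rho(\eta)}{\eta-z}$ plus a quadratic remainder, establishing the model $d(\eta,z)\asymp\rho(\eta)+\abs{\eta-z}^2+\abs{\IM\scp{\partial\rho(\eta)}{\eta-z}}$ via part~(4) of Lemma~\ref{lm:Sh} and the Taylor expansion of $\rho$, then comparing the models at $\xi$ and $\Psi(\xi)$ --- is precisely the intended adaptation of their estimates. One small bookkeeping point: in the near regime $\abs{\xi-z}<\delta$ you only get $\abs{\Psi(\xi)-z}<\delta+C\rho(\xi)$, so the model estimate should be proved for $\abs{\eta-z}<2\delta$, which costs nothing since for $\delta\leq\abs{\eta-z}<2\delta$ both sides are $\asymp 1$.
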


Thus $\mu(V(\xi,\delta))\lesssim\delta^{n+1},$ where $V(\xi,\delta)=\{z\in\OO : d(\xi,z)<\delta\}$ and analogously to \cite{LS13,R13} we have the following classical estimates. 
\begin{lemma}  \label{LerayEst}
 Let $\alpha>0$ and $0<r<\delta<\eps.$ Then
% I_\alpha(z,\delta) = J_\alpha(z,\delta) = 
\begin{align*}
& \int\limits_{z\in\dom,\ d(\xi,z)>\delta} \frac{d\sigma(z)}{d(\xi,z)^{n+\alpha}} \lesssim \delta^{-\alpha},\quad \xi\in\OO;\\
& \int\limits_{\xi\in\dom_r,\ d(\xi,z)>\delta}
\frac{d\sigma_r(\xi)}{d(\xi,z)^{n+\alpha}} \lesssim
\delta^{-\alpha},\quad z\in\Omega;\\
& \int\limits_{z\in\dom,\ d(\xi,z)<\delta} \frac{d\sigma(z)}{d(\xi,z)^n} \lesssim 1+ \log{\frac{\delta}{r}},\quad \rho(\xi) = r<\eps;\\
& \int\limits_{\xi\in\dom_r,\ d(\xi,z)<\delta} \frac{d\sigma_r(\xi)}{d(\xi,z)^n} \lesssim 1+ \log{\frac{\delta}{r}},\quad z\in\Omega.
\end{align*}

\end{lemma}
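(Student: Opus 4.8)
The four bounds are instances of one scheme: split the region of integration into dyadic quasi-annuli for the quasimetric $d$ and estimate each annulus by the growth of the quasiball measure. Beyond the bound $\sigma(B(z,\delta))\lesssim\delta^n$ already recorded on $\dom$, the only input needed is its counterpart with the center off the surface,
\[
\sigma\bigl(\{z\in\dom:d(\xi,z)<t\}\bigr)\lesssim t^n,\ \xi\in\OO,\qquad
\sigma_r\bigl(\{\xi\in\dom_r:d(\xi,z)<t\}\bigr)\lesssim t^n,\ z\in\O,
\]
with constants independent of the small parameter $r$. The first follows from the preceding lemma: since $d(\xi,z)\asymp d(\Psi(\xi),z)+\rho(\xi)\gtrsim d(\Psi(\xi),z)$, the sublevel set sits inside a quasiball $B(\Psi(\xi),ct)$ on $\dom$, of measure $\lesssim t^n$. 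The second amounts to the statement that Shirokov's construction, hence the quasiball bound, is uniform over the family $\dom_r$ of strictly pseudoconvex boundaries.

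For the two tail estimates (those with $\alpha>0$) I would set $A_j=\{\,2^j\delta\le d(\xi,z)<2^{j+1}\delta\,\}$, $j\ge0$, and sum
\[
\int_{d(\xi,z)>\delta}\frac{d\sigma(z)}{d(\xi,z)^{n+\alpha}}
\lesssim\sum_{j\ge0}(2^j\delta)^{-n-\alpha}\,\sigma\bigl(\{d(\xi,z)<2^{j+1}\delta\}\bigr)
\lesssim\delta^{-\alpha}\sum_{j\ge0}2^{-j\alpha}\lesssim\delta^{-\alpha},
\]
the geometric series converging precisely because $\alpha>0$; the second inequality is the identical computation with $\sigma,\dom$ replaced by $\sigma_r,\dom_r$.

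For the two logarithmic estimates the decisive point is a \emph{lower} cutoff for $d$. By property (4) of Lemma \ref{lm:Sh}, whenever $\rho(\xi)=r$ and $\rho(z)\le0$ one has $d(\xi,z)=\abs{v(\xi,z)}\ge\rho(\xi)-\rho(z)\ge r$ for $\abs{\xi-z}$ small and $d(\xi,z)\ge s\ge r$ otherwise, so $d(\xi,z)\gtrsim r$ over the whole region of integration. Decomposing $\{d(\xi,z)<\delta\}$ into the annuli $A_j$ with $0\le j\le J$ and $2^Jr\asymp\delta$, each annulus contributes $(2^jr)^{-n}\sigma(\{d(\xi,z)<2^{j+1}r\})\lesssim1$, whence the sum is $\lesssim J+1\lesssim1+\log(\delta/r)$; the fourth estimate is the same computation on $\dom_r$.

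The dyadic summation is routine, so the genuine obstacle is the second measure bound: the quasiball estimate on the interior level surfaces $\dom_r$, uniform in $r$ and allowing an interior center $z$. I would obtain it as in \cite{LS13,R13}, transporting the $\dom$ estimate through the projection $\Psi$ and exploiting that the collar $\OO$ is foliated by uniformly strictly pseudoconvex hypersurfaces, so that all implied constants are independent of $r$; for $z$ deep inside $\O$ one instead uses $d(\xi,z)\ge s$ to see that the relevant sublevel sets are empty for small $t$ and that $\sigma_r(\dom_r)\lesssim1\lesssim t^n$ for $t\gtrsim s$.
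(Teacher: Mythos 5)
Your proof is correct and is essentially the argument the paper is invoking: the paper gives no proof of Lemma \ref{LerayEst}, deferring to the "classical estimates" of \cite{LS13,R13}, and those are established exactly by your scheme of dyadic quasi-annuli combined with the quasiball measure bound $\sigma(B(z,t))\lesssim t^n$ (transported to off-surface centers via $d(\xi,z)\asymp d(\Psi(\xi),z)+\rho(\xi)$ and made uniform over the level sets $\dom_r$). Your identification of the lower cutoff $d(\xi,z)\gtrsim r$ from property (4) of Lemma \ref{lm:Sh} as the source of the logarithm is precisely the right point.
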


Concluding ideas by N. Shirokov we have the following polynomial approximation of a kernel.

\begin{lemma}[Shirokov \cite{Sh89}] \label{lm:KGlob}
Let $\O$ be a strongly pseudoconvex Runge domain and $\alpha>0.$ Then for every
$m\in\N$ there exists a function $K^{glob}_m(\xi,z)$ which is continuous in $\xi\in\OO,$  polynomial in $z$ with $\deg K^{glob}_m(\xi,\cdot)\lesssim m$ and satisfies the
following properties:

\begin{equation} \label{eq:CLF_approx1}
 \abs{K(\xi,z) - K^{glob}_m(\xi,z)} \lesssim \frac{1}{m^{\alpha}} \frac{1}{d(\xi,z)^{n+\alpha}},\quad d(\xi,z)\geq \frac{1}{m};
\end{equation}

\begin{equation} \label{eq:CLF_approx2}
 \abs{K^{glob}_m(\xi,z)}\lesssim m^n,\quad d(\xi,z)\leq\frac{1}{m}.
\end{equation}

\end{lemma}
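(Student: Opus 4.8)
The plan is to reduce the statement to a one–variable polynomial approximation problem governed by the region $G$ of Lemma~\ref{lm:Sh}, and then to resolve the singularity of the Cauchy--Leray kernel at a single boundary point of $G$ by a multiscale construction whose degree is controlled.

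\emph{Reduction to one variable.} By property (5) of Lemma~\ref{lm:Sh} the point $\lambda(\xi,z)=v(\xi,z)q(\xi,z)$ lies in the fixed region $G$, and by definition
\[
K(\xi,z)=v(\xi,z)^{-n}=q(\xi,z)^n\,\lambda(\xi,z)^{-n}.
\]
Since $v,q,w_j$ are polynomials in $z$ of a fixed degree $D$ independent of $m$ (properties (1)--(3)), it suffices to produce a single–variable polynomial $\pi_m(\zeta)$ with $\deg\pi_m\lesssim m$ and
\[
\abs{\zeta^{-n}-\pi_m(\zeta)}\lesssim \frac{1}{m^\alpha}\,\frac{1}{\abs{\zeta}^{n+\alpha}},\quad \zeta\in G,\ \abs{\zeta}\geq \tfrac1m,\qquad \abs{\pi_m(\zeta)}\lesssim m^n,\quad \zeta\in G,\ \abs{\zeta}\leq \tfrac1m,
\]
and then to set $K^{glob}_m(\xi,z)=q(\xi,z)^n\,\pi_m(\lambda(\xi,z))$. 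Indeed $\deg_z K^{glob}_m\lesssim D\,\deg\pi_m+nD\lesssim m$, while $q$ is continuous and nonvanishing on the compact parameter set (as $\lambda\in G$ stays away from $0$ relative to $v$), so $\abs{\lambda}\asymp\abs{v}=d(\xi,z)$ and the two displayed bounds become exactly \eqref{eq:CLF_approx1}--\eqref{eq:CLF_approx2}.

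\emph{One–variable construction.} The function $\zeta^{-n}$ is holomorphic on $\overline{G}\setminus\{0\}$ with its only singularity at the boundary point $0$, where $\partial G$ is $C^2$ and tangent to the imaginary axis; the geometry of $G$ then gives $\RE\zeta\gtrsim\abs{\zeta}^2$ for $\zeta\in G$ near $0$. I would build $\pi_m$ by a dyadic decomposition of scales around the singularity: for $\delta_j=2^j/m$ with $0\le j\lesssim\log m$, the function $\zeta^{-n}$ is holomorphic in a fixed neighbourhood of $\overline{G}\setminus B(0,\delta_j)$, so by a Bernstein--Walsh/Faber estimate for the $C^2$ region $G$ it admits a polynomial approximation $p_j$ of degree $\lesssim m$ with geometric error $\abs{\zeta^{-n}-p_j(\zeta)}\lesssim\delta_j^{-n}e^{-c\,m\,\delta_j}$ on $\overline{G}\setminus B(0,\delta_j)$. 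Telescoping the $p_j$ and truncating the innermost scale produces a single polynomial $\pi_m$ with $\deg\pi_m\lesssim m$. The point of the tangency is that the conformal (Green's–function) distance from $0$ to the retracted region scales \emph{linearly}, $\asymp\delta_j$, so degree $\lesssim m$ already yields the decisive factor $e^{-cm\delta_j}$.

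\emph{Estimates, the near bound, and the obstacle.} For $\abs{\zeta}\ge 1/m$ one chooses $j$ with $\delta_j\asymp\abs{\zeta}$ and sums the geometric errors over the $O(\log m)$ scales; writing $s=m\abs{\zeta}\ge1$ and using $e^{-cs}\lesssim_\alpha s^{-\alpha}$ gives $\abs{\zeta^{-n}-\pi_m(\zeta)}\lesssim\abs{\zeta}^{-n}e^{-cm\abs{\zeta}}\lesssim m^{-\alpha}\abs{\zeta}^{-n-\alpha}$, which is the far estimate with $\alpha$ arbitrary. For $\abs{\zeta}\le1/m$ the value of $\pi_m$ at the edge $\abs{\zeta}=1/m$ is $\asymp m^n$, and a Bernstein-type inequality for the degree-$\lesssim m$ polynomial $\pi_m$ propagates the bound $\abs{\pi_m}\lesssim m^n$ inward, giving the near estimate. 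The main difficulty is the simultaneous demand of an arbitrary exponent $\alpha$ and the degree bound $\lesssim m$ at the critical scale $1/m$: a single–scale device—a plain Taylor/Faber partial sum of $\zeta^{-n}$, or one Bernstein factor $(1-\eps\zeta)^N$—either fails to supply the relative gain $(m\abs{\zeta})^{-\alpha}$ away from $0$ or forces degree $\asymp m^2$, precisely because $\partial G$ osculates the imaginary axis to second order at the singularity. Carrying out the multiscale summation so that the degrees do not accumulate, while using the $C^2$ geometry of $\partial G$ at $0$ quantitatively, is the delicate core of the argument and is the step I expect to be hardest.
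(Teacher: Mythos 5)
The paper does not prove this lemma itself --- it is quoted from Shirokov \cite{Sh89} --- so the comparison is with the argument of the cited source. Your reduction to one variable is exactly the right (and the standard) move: writing $K=v^{-n}=q^n\lambda^{-n}$ with $\lambda=vq\in G$ and setting $K^{glob}_m(\xi,z)=q(\xi,z)^n\pi_m(\lambda(\xi,z))$ correctly transfers the problem to approximating $\zeta^{-n}$ on the planar region $G$, preserves the degree bound and the continuity in $\xi$, and converts the two required estimates into their one--variable analogues via $\abs{\lambda}\asymp\abs{v}=d(\xi,z)$. This part matches the structure of Shirokov's proof.

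The gap is in the one--variable step, and you have in effect flagged it yourself. The dyadic family $p_j$ of Bernstein--Walsh approximants on $\overline{G}\setminus B(0,\delta_j)$ does not assemble into a single polynomial by ``telescoping'': the sum $p_{j_{\max}}+\sum_{j<j_{\max}}(p_j-p_{j+1})$ collapses to $p_0$, whose uniform error on $\overline{G}\setminus B(0,1/m)$ is only $O(m^n)$ --- vastly larger than the required $m^{-\alpha}\abs{\zeta}^{-n-\alpha}$ at scales $\abs{\zeta}\asymp 1$. To make a multiscale argument work you must decompose the \emph{function}, not the approximants: write $\zeta^{-n}=\sum_j g_j$ with $g_j$ localized at scale $\delta_j$ (e.g.\ via the Cauchy integral over dyadic arcs of a contour through $0$), and approximate each $g_j$ by a degree-$\lesssim m$ polynomial that is simultaneously geometrically small on $\abs{\zeta}\gtrsim\delta_j$ and bounded by $\delta_j^{-n}$ on $\abs{\zeta}\lesssim\delta_j$; only then do the errors sum geometrically at every scale while the degree stays $\lesssim m$. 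This two-sided control is precisely the content of Dzyadyk's classical lemma on polynomial approximation of the kernels $(\zeta-a)^{-k}$, $a\in\partial G$, on domains with smooth boundary (where $\mathrm{dist}(a,L_{1+1/m})\asymp 1/m$ for $C^2$ boundaries), and invoking that lemma is how the one--variable step is actually closed in \cite{Sh89}. As written, your proposal identifies the correct reduction and the correct target estimates but leaves the decisive construction unexecuted, so it is an outline rather than a proof.
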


\section{Holomorphic Besov spaces}
The main result of this work is the characterization of holomorphic functions with boundary values in Besov spaces in terms of polynomial approximations. We use definition of Besov spaces in terms of polynomial modulus of smoothness, but it coincides with classical definition in case of smooth domains (see~\cite{R13}).

\begin{definition}
Let $K\subset\dom,\ 1\leq p\leq\infty.$ The best approximation of a function $f\in L^p(K)$ by polynomials of degree not greater than $m\in\N$ we denote by
\begin{equation} \label{df:Em}
 E_{m}(f,K)_p = \inf_{\deg P\leq m} \norm{f-P}_{L^p(K)}.
\end{equation}
\end{definition}
Notice that
\begin{equation} \label{eq:E1Ep}
 E_m(f,K)_1 \leq \sigma(K)^{1-\frac{1}{p}} E_m(f,K)_p.
\end{equation}

\begin{definition}
We say that a set $F\subset\dom$ is $t-$fat if there exist such $t>0$ and $\xi\in\dom$ that $B(\xi,t/2)\subset F \subset B(\xi,t).$ We say that a family $\{F_j\}$ of disjoint subsets is a $t-$decomposition of $\dom$ if $\dom=\bigcup F_j$ and every $F_j$ is $t-$fat.
\end{definition}

\begin{definition}\label{df:om_poly}
Let $f\in L^p(\dom),\ 1
\leq p\leq\infty.$ We define the polynomial modulus of smoothness of degree $m\geq 0$ as
\begin{equation}
\omega_m(f,h)_p = \sup\norm{ \left\{E_{m-1}(f,F_j)_p\right\}_{j=1}^N}_{l^p},
\end{equation}
where supremum is taken over all $t-$decompositions
$\left\{F_j\right\}$ of $\dom.$
% such that $B(z_j,h) \subset F_j \subset B(z_j,2h) $ for some $z_j\in\dom.$
\end{definition}

\begin{definition} \label{df:Bps_poly}
  Let $1\leq p,q\leq\infty,\ s>0$ and $f\in L^p(\dom).$ We let $f\in B_{pq}^s(\dom)$ if
        $c_{pq}(f) = \norm{ \omega_m(f,t)_p t^{-s-1/q}}_{L^q(0,\eps)} < \infty,$
        where $m\in\N$ and $m>s.$ The definition does not depend on $m>s.$ 
\end{definition}
We define holomorphic Besov space as
\[
A^s_{pq}(\O)=\{f\in H^p(\O): f^*\in B^s_{pq}(\dom)\}
\]
with norm $\norm{f}_{A^s_{pq}} = \norm{f^*}_{L^p(\dom)} + c_{pq}(f^*).$
\section{Local approximations on the boundary.}

In this section we study an interpolation construction of local approximation on fat subsets of $\dom.$

Let $P_0$ be  polynomial in $\R$ of degree $m$ such that
\begin{equation*}
\int\limits_0^1 P_0(x) dx =1,\quad \int\limits_0^1 x^kP_0(x) dx =
0,\ k=1,...,m,
\end{equation*}
and $P_1$ be a polynomial in $\C$ of degree $m$ such that
\begin{equation*}
    \int\limits_{[-1,1]^2} P_1(z) d\mu(z) =1,\quad \int\limits_{[-1,1]^2} z^k \overline{P_1(z)} d\mu(z) = 0,\ k=1,...,m.
\end{equation*}

Without loss of generality we may assume that $\xi=0$ and $T^{\R}_{\xi} = \{z\in\Cn : z_n\in\R\}.$ Then projections $\pr_\xi,\ \pi_\xi$ to the tangent and complex tangent spaces are defined by
\[
\pr_{\xi} (z) = (z',\RE w),\ \pi_\xi(z) = (z',0),\ z=(z',w)\in\C^{n-1}\times\C.
\]
For $u\in\dom,\ h,h_1>0,$ we define a rectangle $\tilde{Q}_u\subset T^{\R}$ by
\[
\tilde{Q}_u= \tilde{Q}_u(h,h_1) = \pr_\xi(u) + \{z\in\Cn : z'\in
[0,h)^{2n-2}\in\C^{n-1},\ w \in[0,h_1)\}.
\]

The projection $\pr_\xi:\dom\to T^{\R}_{\xi}$ is locally invertible and assuming that $h,h_1>0$ and $\abs{\xi-u}$ are small enough we define $Q_u = \pr_\xi^{-1} (\tilde{Q}_u)$ and
\[
u=(u',w_0) =\pr_\xi^{-1} (z',\RE(u_n)),\ (z',w_1(z')) =\pr_\xi^{-1} (z',\RE(u_n)+h_1),
\]
where $(z',0)\in\pi_\xi(Q_u).$

Consider 
%a segment connecting $u$ and $(z',u_n+h_1)$ and 
a curve $\tilde{\gamma}_{z'}$ that connects points
$(z',w_0)$ and $(z',w_1(z'))$ in $\dom$ 
\[
\tilde{\gamma}_{z'} = \pr_{\xi}^{-1}\{ (z',\RE(u_n)+th_1),\ 0\leq t\leq 1  \}.
\]
Finally we denote by ${\gamma}_{z'}$ the curve in $\C$ which is generated by $n$-th coordinate of $\tilde{\gamma}_{z'}$ and connects points $w_0$ and $w_1(z').$ 

Let $Q'_u = \pi_\xi(Q_u)$ and define a function
\begin{equation*}
P_{Q_u}(z',w) = \frac{1}{h^{2(n-1)}}\bar{P}_{n-1}\left(\frac{z'-u'}{h}\right)
\frac{1}{w_1(z')-w_0}P_0\left(\frac{w-w_0}{w_1(z')-w_0}\right).
\end{equation*}
where $\bar{P}_{n-1}(z') =  \overline{P_1(z_1)\cdot\ldots\cdot P_1(z_{n-1})}.$

Then for every polynomial $T$ with degree not greater $m$ in every variable
\begin{multline*}
\int\limits_{Q'_u} d\mu(z') \int\limits_{\gamma_{z'}} T(z',w)
P_{J_u}(z',w) dw\\ =
 \int\limits_{Q'_u} \frac{d\mu(z')}{h^{2(n-1)}}\bar{P}_{n-1}\left(\frac{z'-u'}{h}\right) \int\limits_{w_0}^{w_1(z')}  P_0\left(\frac{w-w_0}{w_1(z')-w_0}\right)\frac{T(z',w) dw}{w_1(z')-w_0} \\
=\int\limits_{Q'_u}\bar{P}_{n-1}\left(\frac{z'-u'}{h}\right)\frac{d\mu(z')}{h^{2(n-1)}}\int\limits_{0}^1 T(z',w_0+ v (w_1(z')-w_0)) dv \\
=\int\limits_{Q'_u} T(z',w_0)\bar{P}_{n-1}\left(\frac{z'-u'}{h}\right)\frac{d\mu(z')}{h^{2(n-1)}} = T(u',w_0) = T(u).
\end{multline*}

We can replace the integral over a curve $\gamma_{z'}$ that connects points $w_0(z')$ and $w_1(z')$ with the integral over a segment $[w_0(z'),w_1(z')]$ because internal function is holomorphic.

We pass now to the description of operator that provides almost best polynomial approximation 
on a $h-$fat set $K\in\dom$ with $B(\xi,h/2)\subset K\subset B(\xi,h).$ Note that $$\pr_\xi(B(\xi,h))\subset [-c\sqrt{h},c\sqrt{h})^{2(n-1)}\times [-ch,ch)=\tilde{Q}\subset T^{\R}_\xi$$ for some $c>0.$ We cover the rectangle $\tilde{Q}$ by a union of $(m+1)^n$ rectangles $\tilde{Q}^j=\tilde{Q}_{u^j}(2c\sqrt{h}/[\sqrt{m+1}],2ch/(m+1)),$ $u_j\in\dom.$  

Let $Q^j = \pr_\xi^{-1}(\tilde{Q}^j).$ We define $\P_K(f)$ as a polynomial such that
\begin{equation} \label{P_K}
    \P_K(f)(u^j) = \int_{\pi_\xi(Q^j)} d\mu(z') \int_{\gamma_{z'}}
P_{Q^j}(z',w) f(z',w)dw , \ j=0,\ldots, m^n.
\end{equation}

Then $\P_K(T)(u^j)=T(u^j)$ and $\P_K(T)=T$ for every polynomial $T$ with degree not greater than $m$ in every coordinate, moreover,
\begin{equation} \label{P_est}
  \max\limits_{\dist{z}{K} \leq \lambda h} \abs{\P_K(f)(z)} \leq\frac{c(\lambda,m,\O)}{\sigma(K)}\int_K \abs{f}d\sigma,
\end{equation}
    as  $\max\limits_{B(\xi,\lambda h)} \abs{P_{Q_j}(z)} <\frac{c(\lambda,m,\Omega)}{h^n},$
    where the constant $c$ depends on $\lambda,\ m$ и $\Omega,$ and does not depend on a set $K$ and function $f.$ This implies
\[
  \norm{ f- \P_K f }_{L^p(K)}\leq (c+1) E_m(f,\ K)_p,
\]
and the polynomial $\P_K f$ gives almost best polynomial approximation of function $f$.

\begin{note} \label{rm:part_poly}
 Let $f\in H^p(\dom),\ 1
 \leq p
 \leq\infty$ and $\{F_j\}$ be a $h$-decomposition of $\dom.$ Define a function $T_h$ that is equal on $F_j$ to a polynomial $\P_{F_j}f.$ Then
\[
  \omega_m(f,h)_p\asymp \sup \norm{ f-T_h }_{L^p(\dom)},
\]
where the supremum is taken over all such decompositions.
\end{note}

\begin{note}
 The existence and uniqueness of a polynomial $P_K(f)$ needs some explanation. The uniqueness is simple induction by dimension $n.$ For $n=1$ it is obvious. Assume that we know result for dimension $(n-1).$ Let $T$ be a polynomial of degree not greater than $m$ in every variable such that $T(u^j)=0.$ Rearrange points $u^j=u^{(\alpha)}=(u_1^{\alpha_1},\ldots,u_{n-1}^{\alpha_{n-1}},u_n^{\alpha})$ by multiindex $\alpha=(\alpha_1,\ldots,\alpha_n)$ with $\alpha_k=0..m.$ Notice that points $(u_1^{\alpha_1},\ldots,u_{n-1}^{\alpha_{n-1}})$ form a rectangular net in $Q_h\subset T_\xi.$ By induction for every fixed $\alpha_1$ we have $T(u^{\alpha_1}_1,u_2,\ldots,u_n)= 0.$ Thus $T(u_1,\ldots,u_n)\equiv 0$ for every fixed $(u_2,\ldots,u_n)\in\C^{n-1}$ as polynomial of degree not greater than $m$ in $u_1\in\C$ that has $m+1$ roots.
  
 Proceed to a construction. Let $n_{kj}=\overline{u^k-u^j},$ then the polynomial 
 $$\suml_{k=0}^{m} s_k\prod\limits_{j\neq k}\frac{\scp{n_{kj}}{z-u^j}}{\scp{n_{kj}}{u^k-u^j}}$$
 has degree not greater $m$ in every variable and is equal to $s_k$ at $u^k.$
\end{note}

\section{Two methods of pseudoanalytic continuation}
Let $\f$ be a pseudoanalytic continuation of function $f\in H^1(\O)$ and $1\leq p\leq\infty.$ We introduce the following important characteristics of function $\f$
\begin{equation} \label{eq:SP}
 S_p(\f,r) = \norm{ \bar{\partial} \f }_{L^p(\dom_r)},\ r>0.
\end{equation}
In this section we generalize ideas by E.M. Dynkin \cite{D81} to construct pseudoanalytic continuations with some estimates in this value.

\subsection{Pseudoanalytic continuation by global polynomial approximations.\label{Cont_glob}}
By \cite{S76} strict pseudoconvexity of domain $\O$ implies that functions holomorphic in neighbourhood of $\O$ are dense in $H^1(\Omega).$ Also every holomorphic in neighbourhood of $\O$ function can be approximated on $\overline{\O}$ by polynomials since $\O$ is Runge. Thus there exists a polynomial sequence $P_1,P_2,\ldots$ converging to $f^*$ in $L^1(\dom).$ Let 
\begin{equation}
\lambda(z) = \rho(z)^{-1} \abs{P_{2^{m+1}}(z) - P_{2^{m}}(z)},\quad 2^{-m}<\rho(z)\leq 2^{-m+1}.
\end{equation}

\begin{theorem} \label{thm:ContGlob}
 Assume that $\lambda\in L^p(\OO)$ for some $p\geq 1.$ Then there exist a pseudoanalytic continuation $\f$ of the function $f$
 such that
\begin{equation}\label{eq:PAC_la}
\abs{\bar{\partial} \mathbf{f}(z)} \lesssim \lambda(z),\quad
z\in\O_\eps\setminus\overline{\O}.
\end{equation}
\end{theorem}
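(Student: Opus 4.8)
The plan is to build $\mathbf{f}$ from the polynomial sequence $P_{2^m}$ by patching together consecutive differences with a smooth cutoff in the normal direction, exactly in the spirit of Dynkin's one-variable construction. Concretely, I would fix a smooth function $\chi:\R\to[0,1]$ with $\chi(t)=1$ for $t\le 1$ and $\chi(t)=0$ for $t\ge 2$, and on the dyadic shell $2^{-m}<\rho(z)\le 2^{-m+1}$ interpolate between the approximating polynomials so that $\mathbf{f}$ agrees with $f^*$ as $\rho\to 0^+$ (giving the matching nontangential boundary values required by the definition of pseudoanalytic continuation) and is cut off to zero before $\rho$ reaches $\eps$ (giving $\supp\mathbf{f}\subset\O_\eps$). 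A convenient way to write this is
\begin{equation*}
\mathbf{f}(z) = \chi\!\left(\tfrac{\rho(z)}{\eps}\right)\Bigl( P_{2^{m_0}}(z) + \SK \bigl(P_{2^{m+1}}(z)-P_{2^{m}}(z)\bigr)\,\theta_m(\rho(z)) \Bigr),
\end{equation*}
where each $\theta_m$ is a smooth transition equal to $1$ for $\rho\le 2^{-m-1}$ and $0$ for $\rho\ge 2^{-m+1}$, so that on each shell only boundedly many terms are active and $\mathbf{f}\in C^1(\O_\eps\setminus\overline\O)$.

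Next I would compute $\bar\partial\mathbf{f}$. Since each $P_{2^m}$ is holomorphic, $\bar\partial$ annihilates the polynomials themselves, and every contribution comes from differentiating the cutoffs $\theta_m\circ\rho$ and $\chi(\rho/\eps)$. On the shell $2^{-m}<\rho\le 2^{-m+1}$ the derivative $\theta_m'(\rho)$ is supported there and satisfies $\abs{\theta_m'(\rho)}\lesssim 2^{m}\asymp \rho^{-1}$, so that
\begin{equation*}
\abs{\bar\partial\mathbf{f}(z)} \lesssim \rho(z)^{-1}\abs{P_{2^{m+1}}(z)-P_{2^{m}}(z)} + \rho(z)^{-1}\abs{P_{2^{m}}(z)-P_{2^{m-1}}(z)} \asymp \lambda(z)
\end{equation*}
on that shell, where the two terms account for the overlap of neighbouring transition functions; the $\chi(\rho/\eps)$ factor only contributes on the fixed outer shell where $\rho\asymp\eps$ and is harmless. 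This yields exactly the bound \eqref{eq:PAC_la}.

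Two points need care. First, for the construction to define a genuine pseudoanalytic continuation I must verify that the boundary values of $\mathbf{f}$ coincide nontangentially with $f^*$; this follows because $P_{2^m}\to f^*$ in $L^1(\dom)$ and the telescoping series, read near the boundary, reproduces $f^*$ in the limit $\rho\to 0^+$, so the hypothesis $\lambda\in L^p(\OO)$ is what guarantees the normal derivative term is summable and the limit is attained in the appropriate sense. Second, I should confirm $\mathbf{f}\in C^1$ across the dyadic interfaces, which holds because the $\theta_m$ are chosen with matching smooth overlaps and only finitely many terms are nonzero near any point of $\O_\eps\setminus\overline\O$.

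I expect the main obstacle to be the boundary-value verification rather than the derivative estimate. The derivative bound is essentially bookkeeping once the cutoffs are fixed, but showing that $\mathbf{f}$ has nontangential limit $f^*$ on $\dom$ requires controlling the tail of the telescoping sum as $\rho\to 0^+$ and relating the dyadic decomposition to nontangential approach regions; here the integrability hypothesis $\lambda\in L^p(\OO)$, together with the measure estimate $\mu(V(\xi,\delta))\lesssim\delta^{n+1}$ and Lemma~\ref{LerayEst}, should be what makes the argument go through. This is precisely the step where Dynkin's one-variable technique must be adapted to the geometry of the strictly pseudoconvex boundary, so I would treat it most carefully.
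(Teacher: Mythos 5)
Your construction is essentially the paper's (a smooth cutoff in $\rho$ interpolating between $P_{2^m}$ and $P_{2^{m+1}}$ on each dyadic shell, times an outer cutoff $\chi(\rho/\eps)$), but two points need repair. First, a technical one: because your transitions $\theta_m$ overlap two shells ($\theta_m$ varies on $2^{-m-1}\le\rho\le 2^{-m+1}$), on the shell $2^{-m}<\rho\le 2^{-m+1}$ you pick up \emph{both} $\rho^{-1}\abs{P_{2^{m+1}}-P_{2^m}}$ and $\rho^{-1}\abs{P_{2^m}-P_{2^{m-1}}}$. The second term is \emph{not} $\lesssim\lambda(z)$ at that point, since $\lambda$ on that shell is by definition only $\rho^{-1}\abs{P_{2^{m+1}}-P_{2^m}}$; your ``$\asymp\lambda(z)$'' is therefore not justified. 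The fix is to confine each transition to a single shell, as the paper does with $\chi(2^m\rho(z))$ where $\chi=1$ for $t\le 5/4$ and $\chi=0$ for $t\ge 7/4$, so that $\f=P_{2^{m+1}}$ in a neighbourhood of each interface $\rho=2^{-m}$ and only one difference is active on each shell.

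The more substantive gap is the verification that $\f$ is a pseudoanalytic continuation of $f$, which you correctly flag as the hard step but then only gesture at (``the telescoping series, read near the boundary, reproduces $f^*$''). A direct analysis of nontangential limits of the telescoping sum is awkward, and the measure estimate $\mu(V(\xi,\delta))\lesssim\delta^{n+1}$ and Lemma~\ref{LerayEst} do not by themselves produce it. The paper's device is to sidestep boundary values entirely: define $F_m(z)=\f(z)$ for $\rho(z)>2^{-m}$ and $F_m(z)=P_{2^{m+1}}(z)$ for $\rho(z)<2^{-m}$. Each $F_m$ is smooth, holomorphic on $\Omega_{2^{-m}}$, satisfies $\abs{\dbar F_m}\lesssim\lambda$ outside, and is trivially a pseudoanalytic continuation of the polynomial $P_{2^{m+1}}$; hence the Stokes/Leray--Koppelman identity (\ref{CLF_co}) gives $P_{2^{m+1}}(z)=\int_{\O_\eps\setminus\O}\dbar F_m(\xi)\wedge\omega(\xi,z)\,K(\xi,z)$ for $z\in\O$. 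Since $\dbar F_m\to\dbar\f$ pointwise, $\abs{\dbar F_m}\lesssim\lambda\in L^p\subset L^1(\OO)$, and the kernel is bounded for fixed $z\in\O$, dominated convergence lets you pass to the limit and obtain the representation of $f$ by $\f$. This is the missing idea; without it (or an equivalent argument) your proof does not establish that $\f$ continues $f$.
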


\begin{proof}
Consider a function $\chi\in C^\infty(0,\infty)$ such that $\chi(t)=1$
for $t\leq \frac{5}{4}$ and $\chi(t)=0$ for $t\geq \frac{7}{4}.$ We let 
for $m\in\N$ 
\[
\mathbf{f}_0(z) =  P_{2^{m}}(z) + \chi(2^m\rho(z)) (P_{2^{m+1}}(z) - P_{2^{m}}(z)),\ 2^{-m}<\rho(z)<2^{-m+1},
\]
and define the continuation of the function $f$ by formula $\f=\chi(2\rho(z)/\eps)\f_0(z).$

Now $\mathbf{f}$ is $C^1$-function on $\Cn\setminus\overline{\Omega}$ and
$\abs{\bar{\partial} \mathbf{f}(z)} \lesssim \lambda(z).$ We define
a function $F_m(z)$ as $F_m(z)= \mathbf{f}(z)$ for $\rho(z)>2^{-m}$
and as $F_m(z) = P_{2^{m+1}}(z)$ for $\rho(z)<2^{-m}.$ The function $F_m$ is smooth and holomorphic in $\Omega_{2^{-m}},$ and
$\abs{\dbar F_m(z)}\lesssim\lambda(z)$ for
$z\in\Cn\setminus\Omega_{2^{-m}}.$ Thus %similarly to (\ref{eq:PAC0})
 we get
$$P_{2^{m+1}}(z) = F_m(z) = \int\limits_{\O_\eps\setminus\O} \frac{\bar{\partial}F_m(\xi) \wedge\omega(\xi,z)}{\scp{w(\xi,z)}{\xi-z}^n},\ z\in\Omega,$$

We can pass to the limit in this formula by the dominated
convergence theorem; hence, the function~$\f$ is a pseudoanalytic continuation of the
function~$f$. \qed
\end{proof}

%This following lemma allows us to connect 
\begin{lemma}\label{lm:Pm}
Let $P_{2^m}$ be a polynomial of degree $2^m$ and $1\leq q\leq\infty.$ Then
\begin{equation}
\norm{P_{2^m}}_{L^q(\dom_r)}\lesssim \norm{P_{2^m}}_{L^q(\dom)},\ 2^{-m}\leq r\leq 2^{-m+1}
\end{equation}
and the constant does not depend on $m$.
\end{lemma}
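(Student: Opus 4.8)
The plan is to read the estimate as a Bernstein--Nikolskii inequality: a polynomial of degree $N=2^m$ cannot gain more than a bounded factor when its level surface is pushed outward by a distance $\asymp 2^{-m}\asymp 1/N$, the essential point being that the degree $N$ and the shift $r$ are coupled through $rN\lesssim1$. The model case is the disc, where Dynkin's device makes everything transparent: for $P(z)=\suml_{k=0}^N a_k z^k$ the reciprocal polynomial $P^{*}(z)=z^N\overline{P(1/\bar z)}$ is again holomorphic of degree $N$, satisfies $\abs{P^{*}}=\abs{P}$ on the unit circle, and obeys $\abs{P(Re^{i\theta})}=R^N\abs{P^{*}(e^{i\theta}/R)}$. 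Since $\abs{P^{*}}^q$ is subharmonic its circular $L^q$-means increase with the radius ($q=\infty$ being the maximum principle), so passing to the inner circle $\{\abs z=1/R\}$ gives $\norm{P}_{L^q(\abs z=R)}\le R^N\norm{P}_{L^q(\abs z=1)}$, and $R^N=(1+r)^N\le e^{rN}\lesssim1$. I stress that this handles every $q\in[1,\infty]$ at once, because subharmonic monotonicity is already an integral statement and no interpolation is required.

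To transplant this to $\O$ I would first use the exponential map $\Phi_r:\dom\to\dom_r$, together with the comparability $d\sigma_r\asymp d\sigma$ of the induced measures for small $r$, to rewrite the claim as the uniform bound $\norm{P\circ\Phi_r}_{L^q(\dom)}\lesssim\norm{P}_{L^q(\dom)}$ for $2^{-m}\le r\le 2^{-m+1}$ and $\deg P\le 2^m$. The inner, ``subharmonic'' half of the disc argument survives unchanged: by the Hardy space theory already invoked, $\norm{P}_{L^q(\dom_t)}\le\norm{P}_{H^q(\O)}\asymp\norm{P}_{L^q(\dom)}$ for every $-\eps<t<0$, so every level surface interior to $\O$ is already controlled by $\dom$. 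What is missing is a substitute for the reflection $P\mapsto P^{*}$, i.e.\ a mechanism trading the outer surface $\dom_r$ for inner data at uniformly bounded cost. I would manufacture it from the Cauchy--Leray machinery: the polynomial kernel $K^{glob}_M$ of Lemma~\ref{lm:KGlob} with $M\asymp 2^m$ is a smoothed reproducing kernel, polynomial of degree $\lesssim M$ in its second variable, so the representation $P(\xi)=\int_{\dom}P(\eta)K^{glob}_M(\eta,\xi)\,\omega(\eta,\xi)$ — which I would establish to be exact, or exact up to the error controlled by the estimates of Lemma~\ref{lm:KGlob}, on polynomials of degree $\le N$ — continues in $\xi$ to $\dom_r$. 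Testing this identity in $L^q(\dom_r)$ then reduces the lemma to an $L^1$/$L^\infty$ (Schur) bound for the associated integral operator, for which Lemma~\ref{LerayEst} and the volume estimate $\mu(V(\xi,\delta))\lesssim\delta^{n+1}$ are the natural tools.

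The hard part is the constant: it must be independent of $m$, and a crude Schur estimate is not good enough. The genuine Cauchy--Leray kernel behaves like $d(\eta,\xi)^{-n}$, whose integral over $\dom$ is only $1+\log(\delta/r)\asymp m$ by the third inequality of Lemma~\ref{LerayEst}, so bounding the kernel in absolute value loses a factor $\log 2^m=m$ — exactly the logarithm that separates a sharp Dirichlet truncation from the clean reflection estimate. The gain must instead come from the coupling $rN\lesssim1$ together with the cancellation carried by the (near-)exact reproduction of degree-$\le N$ polynomials by $K^{glob}_M$, which in the model case is encoded purely algebraically in the factor $R^N$ rather than in any size bound. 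Concretely I expect to split the operator into the diagonal block $d(\eta,\xi)\lesssim 1/m$, where $\abs{K^{glob}_M}\lesssim m^n$ meets a set of measure $\lesssim m^{-n}$, and the tail $d(\eta,\xi)\gtrsim 1/m$, where the difference $K-K^{glob}_M\lesssim m^{-\alpha}d(\eta,\xi)^{-n-\alpha}$ is summable by the first inequality of Lemma~\ref{LerayEst}; the remaining genuinely singular piece of $K$ is the one that has to be absorbed by reproduction rather than by its modulus, and making that single step log-free — the analogue of Dynkin's reflection in the pseudoconvex setting — is where essentially all of the difficulty lies.
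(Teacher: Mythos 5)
Your reading of the statement as a Bernstein--Nikolskii inequality with the coupling $rN\lesssim 1$, and your disc model via the reciprocal polynomial and subharmonic means, are exactly the right heuristic --- but the multidimensional mechanism you propose is not the one that works, and you say so yourself: the route through $K^{glob}_m$ and a Schur bound loses the factor $\log 2^m\asymp m$ on the diagonal, and you leave the ``log-free'' step, which is the entire content of the lemma, unresolved. That is a genuine gap, not a detail: reproducing-kernel estimates of the form $|K|\lesssim d(\eta,\xi)^{-n}$ cannot see the algebraic cancellation that produces the factor $(1+r)^N\lesssim 1$ in the disc, so no amount of splitting into $d(\eta,\xi)\lessgtr 1/m$ will recover it.

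The paper's proof supplies precisely the missing ``analogue of Dynkin's reflection,'' and it does so by reducing to one complex variable rather than by working with an $n$-dimensional integral operator. Fix $\xi\in\dom$ and the complex normal $n=n(\xi)$; for each direction $w\in\mathbb{CP}^{n-1}$ intersect $\dom$ with the complex line $u\mapsto \xi+u(n+w)$ to get a $C^2$ arc $\gamma_w\subset\C$, these slices sweeping out a neighbourhood of $\xi$ on $\dom$. Let $\psi_w$ be the conformal map of $\C\setminus\gamma_w$ onto $\{|v|>1\}$ with $\psi_w'(\infty)>0$; by $C^2$ smoothness the trace of $\dom_r$ on the slice sits where $|\psi_w|-1\asymp r$. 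The reflection is then the substitution $H_w(u)=P_{2^m}(\xi+u(n+w))\,\psi_w(u)^{-2^{m+2}}$ (the exponent $2^{n+2}$ in the source is a typo for $2^{m+2}$): this is holomorphic off $\gamma_w$ and vanishes at infinity because the decay $\psi_w^{-2^{m+2}}$ beats the degree $2^m$, so the Smirnov-class monotonicity of $L^q$ means over the level curves $\{|\psi_w|=1+r\}$ --- the exact one-variable fact you used in the disc --- gives $\|H_w\|_{L^q(\gamma_{w,r})}\lesssim\|H_w\|_{L^q(\gamma_w)}$ for all $q$ at once; since $|\psi_w|^{2^{m+2}}\asymp(1+2^{-m})^{2^m}\asymp 1$ on the band $2^{-m}\le\rho<2^{-m+1}$, this is the desired bound for $P_{2^m}$ on each slice. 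Integrating over $w$ and taking a finite covering of $\dom$ finishes. In short: you should slice, not convolve --- the lemma is a one-variable statement applied fibrewise, with $\psi_w^{-cN}$ playing the role of $z^{-N}$, and the constant is $m$-independent because $(1+2^{-m})^{2^m}$ is bounded, not because any kernel estimate is summable.
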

\begin{proof}
Let $\xi\in\dom$ and $n=n(\xi)$ be a complex normal at this point. For some $\delta>0$ and every $w\in \mathbb{CP}^{n-1}$ a set $$\gamma_w=\{u\in\C: (n+\xi+w)u\in\dom,\ \abs{u}<\delta \}$$ is a simple non closed $C^2$-smooth curve. Also curves $\tilde{\gamma}_w=\{u(n+\xi+w):u\in\gamma_w,\ |u|<\delta\}$ cover a neighbourhood of $\xi\in\dom$ in a set $\dom\setminus T_\xi.$

Consider a conformal map $\psi_w:\C\setminus\gamma_w\to\{|v|>1\}$ such that $\psi_w'(\infty)>0.$ The smoothness of $\dom$ implies that the map $\psi_w$ can be continued to a conformal map of closed domains. The smoothness of $\dom$ implies that there exists some neighbourhood $V=V_\xi$ of point $\xi$ in $\Cn$ and constant $c_1,c_2>0$ such that 
\begin{equation}\label{eq}
\gamma_{w,r}=\{u\in\C: u(n+w)\in\dom_r\cap V\}\subset\{u\in\C: c_1r<\psi_w(u)-1<c_2r\}.
\end{equation}
Thus
\[
\abs{\psi_w(u)}^{2^{m+2}}\asymp (1+2^{-m})^{2^m}\asymp 1,\ 2^{-m}\leq \rho(u(n+w))< 2^{-m+1};
\]
\[
\abs{\psi_w(u)}=1,\ u\in\gamma_w.
\]
Consider a function
\begin{equation*}
H_w(u)=P_{2^m}(u(n+w))\psi_w(u)^{-2^{n+2}},
\end{equation*}
holomorphic in $\C\setminus\gamma_w$ such that $\abs{u}H_w(u) \to 0,\ u\to\infty.$ Then
\begin{equation*}
\sup\limits_{r>0}\norm{H_w}_{L^q(\abs{\psi_w}=1+r)}
\lesssim\norm{H_w}_{L^q(\gamma_w)}
\end{equation*}
and
\begin{equation*}
\sup\limits_{r>0}\norm{H_w}_{L^q(\gamma_{w,r})}
\lesssim\norm{H_w}_{L^q(\gamma_w)}.
\end{equation*}
Then  
\begin{equation*}
 \norm{P_{2^m}}_{L^q(\gamma_{w,r})}\lesssim \norm{H_w}_{L^q(\gamma_{w,r})}\lesssim \norm{H_w}_{L^q(\gamma_{w})} =\norm{P_{2^m}}_{L^q(\gamma_{w})}
\end{equation*}
for $2^{-m }\leq r< 2^{-m+1}$
and integrating this estimate by $w\in \mathbb{CP}^{n-1}$ we get
\begin{equation*}
 \norm{P_{2^m}}_{L^q(V\cap\dom_r)}\lesssim \norm{P_{2^m}}_{L^q(\dom\bigcap V)},\ 2^{-m}\leq r<2^{-m+1}.
\end{equation*}

Finally, we choose the finite covering of $\dom$ that also covers a set $\O_{2^{-m+1}}\setminus\O_{2^{-m}}$ and obtain the desired estimate.

\end{proof}
\begin{corollary}\label{cor:ContGlob}
The continuation $\f$ in Theorem~\ref{thm:ContGlob} satisfies an estimate
\begin{equation} \label{eq:SpEn1}
S_p(\f,r) \lesssim 2^m E_{2^m}(f)_p,\  2^{-m}\leq r\leq 2^{-m+1}.
\end{equation}
\end{corollary}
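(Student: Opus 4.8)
The plan is to combine Theorem~\ref{thm:ContGlob} with the $L^q$-stability estimate of Lemma~\ref{lm:Pm}, applied with $q=p$. Recall that $S_p(\f,r)=\norm{\bar\partial\f}_{L^p(\dom_r)}$, and that Theorem~\ref{thm:ContGlob} gives the pointwise bound $\abs{\bar\partial\f(z)}\lesssim\lambda(z)$, where on the shell $2^{-m}<\rho(z)\leq 2^{-m+1}$ we have $\lambda(z)=\rho(z)^{-1}\abs{P_{2^{m+1}}(z)-P_{2^m}(z)}$. Thus for $r$ in this range I would write
\begin{equation*}
S_p(\f,r)\lesssim \norm{\rho^{-1}(P_{2^{m+1}}-P_{2^m})}_{L^p(\dom_r)}\lesssim 2^m\,\norm{P_{2^{m+1}}-P_{2^m}}_{L^p(\dom_r)},
\end{equation*}
using that $\rho\asymp 2^{-m}$ on $\dom_r$ for $2^{-m}\leq r\leq 2^{-m+1}$. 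The difference $P_{2^{m+1}}-P_{2^m}$ is a polynomial of degree $\lesssim 2^{m+1}$, so Lemma~\ref{lm:Pm} transfers its $L^p(\dom_r)$ norm back to $L^p(\dom)$ at the cost of a constant independent of $m$, giving $\norm{P_{2^{m+1}}-P_{2^m}}_{L^p(\dom_r)}\lesssim\norm{P_{2^{m+1}}-P_{2^m}}_{L^p(\dom)}$.

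It then remains to estimate $\norm{P_{2^{m+1}}-P_{2^m}}_{L^p(\dom)}$ by the best-approximation quantity $E_{2^m}(f)_p$. First I would insert $f^*$ via the triangle inequality,
\begin{equation*}
\norm{P_{2^{m+1}}-P_{2^m}}_{L^p(\dom)}\leq \norm{f^*-P_{2^{m+1}}}_{L^p(\dom)}+\norm{f^*-P_{2^m}}_{L^p(\dom)}.
\end{equation*}
Here I would want the approximating polynomials $P_{2^k}$ furnished by the construction of Section~\ref{Cont_glob} to be (nearly) best $L^p$-approximations, so that each term on the right is $\lesssim E_{2^m}(f)_p$; since $E_m(f)_p$ is nonincreasing in $m$, the larger-degree term $E_{2^{m+1}}(f)_p$ is dominated by $E_{2^m}(f)_p$. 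Chaining these estimates yields $S_p(\f,r)\lesssim 2^m E_{2^m}(f)_p$ for $2^{-m}\leq r\leq 2^{-m+1}$, which is exactly \eqref{eq:SpEn1}.

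The main obstacle is the step where I pass from the specific dyadic polynomial sequence $\{P_{2^m}\}$ fixed in Section~\ref{Cont_glob} to the best-approximation numbers $E_{2^m}(f)_p$: the sequence was only assumed to converge to $f^*$ in $L^1(\dom)$, and one must ensure it can simultaneously be chosen to realize (up to a constant) the $L^p$ best approximations, so that $\norm{f^*-P_{2^k}}_{L^p(\dom)}\lesssim E_{2^k}(f)_p$. I expect this to be the delicate point, resolved by selecting each $P_{2^k}$ as a near-optimal degree-$2^k$ polynomial in $L^p(\dom)$ rather than an arbitrary convergent sequence. A secondary technical point to check is the degree bookkeeping in Lemma~\ref{lm:Pm}: the lemma is stated for polynomials of degree $2^m$ on the shell $2^{-m}\leq r\leq 2^{-m+1}$, while $P_{2^{m+1}}-P_{2^m}$ has degree up to $2^{m+1}$, so one applies the lemma with the index shifted by one, noting that the constant there is uniform in the degree parameter and the shift only affects constants. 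Everything else is the routine bookkeeping of constants that are independent of $m$ by the uniformity assertions already built into Lemma~\ref{lm:Pm} and Theorem~\ref{thm:ContGlob}.
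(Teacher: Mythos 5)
Your proposal is correct and is essentially the argument the paper intends: the corollary is stated immediately after Lemma~\ref{lm:Pm} precisely so that the pointwise bound $\abs{\bar\partial\f}\lesssim\lambda$ from Theorem~\ref{thm:ContGlob} can be combined with the transfer estimate $\norm{P_{2^{m+1}}-P_{2^m}}_{L^p(\dom_r)}\lesssim\norm{P_{2^{m+1}}-P_{2^m}}_{L^p(\dom)}$ and the triangle inequality through $f^*$, with the $P_{2^k}$ chosen as near-best $L^p$ approximants. The delicate point you flag (choosing the dyadic sequence to realize $E_{2^k}(f)_p$ rather than merely to converge in $L^1$) is indeed the implicit choice made when the corollary is invoked in Theorem~\ref{thm:ConstrCond}, so your reading matches the paper's.
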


\subsection{Pseudoanalytic continuation by local polynomial approximations}
Let $f\in H^1(\O),\ m>0,\ z\in\OO.$ Denote $J(z)=B(\Psi(z),\rho(z)/10)$ and
\begin{equation}
E(f,z)=E_m(f,J(z))_1.
\end{equation} 

\begin{theorem} \label{thm:LocCont}
 Assume that $p\geq 1$ and 
 \begin{equation}
   E(f,z)\rho(z)^{-(n+1)}\in L^p(\OO).
 \end{equation}
Then $f\in L^p(\dom)$ and there exist such pseudoanalytic continuation $\f$ of function $f$ such that 
 \begin{equation}
 \abs{\dbar\f(z)}\lesssim E(f,z)\rho(z)^{-(n+1)}.
 \end{equation}
\end{theorem}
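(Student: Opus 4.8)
The plan is to transplant E.~M.~Dynkin's construction to the collar $\OO$: glue the local almost-best approximations $\P_K f$ of Section~5 by a smooth partition of unity adapted to the anisotropic quasimetric geometry, and read off the $\dbar$-bound from the identity $\sum_\nu\dbar\varphi_\nu\equiv0$ together with the sup-estimate \eqref{P_est}. Concretely, I would fix a Whitney-type covering of $\OO$ by cells $U_\nu$ sitting at dyadic depth $2^{-k}\le\rho<2^{-k+1}$ whose projections $K_\nu=\Psi(U_\nu)\subset\dom$ are fat sets of quasimetric radius $\approx\rho/C$ for a large fixed constant $C$; the factor $C$ is chosen so that every cell meeting a point $z$ at depth $\rho$ has its projection inside $J(z)=B(\Psi(z),\rho/10)$. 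Since $\pr_\xi(B(\xi,h))\subset[-c\sqrt h,c\sqrt h)^{2(n-1)}\times[-ch,ch)$, each cell is Euclidean-anisotropic, of size $\approx\sqrt\rho$ in the $2(n-1)$ complex-tangential directions and $\approx\rho$ in the two remaining directions, so a subordinate smooth partition of unity $\{\varphi_\nu\}$ of bounded overlap satisfies $\abs{\dbar\varphi_\nu}\lesssim\abs{\nabla\varphi_\nu}\lesssim1/\rho$. To each cell I attach the polynomial $P_\nu=\P_{K_\nu}f$ of degree $\le m$ in each variable given by \eqref{P_K} and set $\f=\chi(2\rho/\eps)\sum_\nu\varphi_\nu P_\nu$, where $\chi$ is the radial cut-off of Theorem~\ref{thm:ContGlob} enforcing $\supp\f\subset\O_\eps$; its only effect on $\dbar\f$ is a bounded term supported on the fixed layer $\{\rho\asymp\eps\}$, harmless in \eqref{CLF_co} since there $d(\xi,\cdot)$ stays bounded below.

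For the $\dbar$-estimate fix $z\in\OO$ with $\rho(z)=\rho$ and let $Q=\P_{J(z)}f$. Because every $P_\nu$ is holomorphic and $\sum_\nu\varphi_\nu\equiv1$ near $z$, subtracting the $\nu$-independent term $Q$ gives $\dbar\f(z)=\sum_\nu\dbar\varphi_\nu(z)\,(P_\nu(z)-Q(z))$, the sum running over the boundedly many cells meeting $U_{\nu_0}$, all with $K_\nu\subset J(z)$. For each such cell, $\norm{P_\nu-Q}_{L^1(K_\nu)}\le\norm{f-P_\nu}_{L^1(K_\nu)}+\norm{f-Q}_{L^1(J(z))}\lesssim E_m(f,K_\nu)_1+E_m(f,J(z))_1\lesssim E(f,z)$, using $K_\nu\subset J(z)$ in both terms. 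Since $P_\nu-Q$ has degree $\le m$ in each variable and is reproduced by $\P_{K_\nu}$, the sup-estimate \eqref{P_est}, applied at the collar point $z$ which lies within Euclidean distance $\lesssim\rho$ of $K_\nu$, yields $\abs{P_\nu(z)-Q(z)}\lesssim\sigma(K_\nu)^{-1}E(f,z)\approx\rho^{-n}E(f,z)$. Combining with $\abs{\dbar\varphi_\nu}\lesssim1/\rho$ and bounded overlap gives the claimed $\abs{\dbar\f(z)}\lesssim E(f,z)\rho^{-(n+1)}$.

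It remains to check that $\f$ is a genuine pseudoanalytic continuation and that $f\in L^p(\dom)$. For the boundary values, let $\xi_0$ be a Lebesgue point of $f^*$ and let $z\to\xi_0$ nontangentially; every active $P_\nu$ equals $\P_{K_\nu}(f^*)$ on a quasiball $K_\nu\to\{\xi_0\}$, and since $\P_{K_\nu}$ reproduces constants, $\abs{P_\nu(z)-f^*(\xi_0)}=\abs{\P_{K_\nu}(f^*-f^*(\xi_0))(z)}\lesssim\sigma(K_\nu)^{-1}\int_{K_\nu}\abs{f^*-f^*(\xi_0)}\,d\sigma\to0$ by \eqref{P_est}; hence $\f(z)=\sum_\nu\varphi_\nu(z)P_\nu(z)\to f^*(\xi_0)$, so the nontangential limits of $\f$ and $f$ agree a.e.\ and \eqref{CLF_co} applies. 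Finally, $f\in L^p(\dom)$ follows by passing from $L^1$- to $L^p$-approximations through \eqref{eq:E1Ep} and integrating the hypothesis in the radial variable: this bounds a finite $L^p$-averaged polynomial modulus of smoothness of $f^*$ from above by $\norm{E(f,\cdot)\rho^{-(n+1)}}_{L^p(\OO)}$, placing $f^*$ in a Besov space $B^s_{pp}(\dom)$ with $s=1-1/p>0$ for $p>1$, hence in $L^p(\dom)$; for $p=1$ the inclusion $f^*\in L^1(\dom)$ is immediate from $f\in H^1(\O)$.

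The main obstacle is the quantitative core of the second paragraph: arranging the anisotropic Whitney geometry so that the partition of unity obeys $\abs{\dbar\varphi_\nu}\lesssim\rho^{-1}$ while, simultaneously, the neighbouring local polynomials obey $\abs{P_\nu-Q}\lesssim\rho^{-n}E(f,z)$, so that the two scales multiply to the sharp weight $\rho^{-(n+1)}$. Everything else — the outer cut-off, the Lebesgue-point identification of boundary values, and the $L^p$-membership — is comparatively routine once these two estimates, and the crucial inclusion $K_\nu\subset J(z)$ secured by the factor $\tfrac1{10}$ in the definition of $J(z)$, are in place.
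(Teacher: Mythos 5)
Your construction of the continuation and the derivation of the pointwise bound $\abs{\dbar\f(z)}\lesssim E(f,z)\rho(z)^{-(n+1)}$ are essentially the paper's own argument: a Whitney partition of unity $\{\chi_k\}$ on $\OO$ with $\abs{\grad{\chi_k}}\lesssim\rho^{-1}$, local projectors $\P_{J_k}f$ on quasiballs $J_k\subset J(z)$, the identity $\dbar\f=\sum_k(\P_{J_k}f-T)\dbar\chi_k$ with the $\nu$-independent polynomial $T=\P_{J(z)}f$ subtracted for free, and the sup-estimate \eqref{P_est} giving $\abs{\P_{J_k}f(z)-T(z)}\lesssim\sigma(J(z))^{-1}E(f,z)\lesssim\rho(z)^{-n}E(f,z)$. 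That half of the proof is correct and matches the paper.

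The genuine gap is in your argument that $f\in L^p(\dom)$, which is half of the conclusion and the only place the integrability hypothesis is really needed. You invoke \eqref{eq:E1Ep}, but that inequality reads $E_m(f,K)_1\le\sigma(K)^{1-1/p}E_m(f,K)_p$: it controls the $L^1$ best approximation by the $L^p$ one, which is the wrong direction here --- your datum $E(f,z)$ is an $L^1$ quantity, and no reverse H\"older inequality turns it into an $L^p$ modulus of smoothness. Moreover, concluding ``$f^*\in B^{1-1/p}_{pp}(\dom)$, hence $f^*\in L^p$'' is circular in this paper's framework, since Definition~\ref{df:Bps_poly} presupposes $f\in L^p(\dom)$ before the Besov seminorm is even defined (and for $p=1$ the index $s=1-1/p=0$ is not admissible). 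The paper's route is direct and avoids both problems: take nested dyadic $2^{-m}$-decompositions $\{J_k^m\}$ of $\dom$, let $T_m$ be the piecewise polynomial equal to $\P_{J_k^m}f$ on $J_k^m$, and telescope $f=T_{m_0}+\sum_m(T_{m+1}-T_m)$. The correct replacement for your use of \eqref{eq:E1Ep} is once again the sup-estimate \eqref{P_est}: it converts the $L^1$ quantity $E_m(f,J_k^m)_1$ into the pointwise bound $\abs{T_{m+1}-T_m}\lesssim 2^{nm}\inf\{E(f,z):z\in L(J_k^m)\}$ on $J_k^{m+1}$, where $L(J_k^m)\subset L_m=\{z:c_12^{-m}<\dist{z}{\O}<c_22^{-m}\}$ is the collar of volume $\asymp 2^{-m(n+1)}$ lying over $J_k^m$; integrating this over $J_k^{m+1}$ and comparing with the mean of $E(f,\cdot)^p$ over $L(J_k^m)$ yields $\norm{T_{m+1}-T_m}_{L^p(\dom)}\lesssim 2^{-m(n+1)(1-1/p)}\norm{E(f,\cdot)\rho^{-(n+1)}}_{L^p(L_m)}$, a geometrically decaying factor for $p>1$ and a factor $1$ for $p=1$, so the series converges in $L^p(\dom)$ by H\"older in $m$ (respectively by disjointness of the $L_m$). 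You should replace the $L^p$-membership argument of your third paragraph by this telescoping estimate. A minor additional caveat: the outer cutoff $\chi(2\rho/\eps)$ produces a term $\f_0\,\dbar\chi$ on the layer $\{\rho\asymp\eps\}$ that is not dominated by $E(f,z)\rho(z)^{-(n+1)}$ (take $f$ a polynomial of degree $\le m$, so $E\equiv0$ while $\f_0\dbar\chi\not\equiv0$); this looseness is shared with the paper and is not specific to your write-up, but it should be acknowledged rather than dismissed as harmless.
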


\begin{proof}
First, we will prove that $f\in L^p(\dom).$ Assume that $J\subset\dom$ is $h-fat$ and let
\[
L(J) =\left\{z\in\OO: c_1 h<d(\xi,z)<c_2 h\ \text{for some}\ \xi\in J\right\}
\] 
where constants $0<c_1<c_2<1$ are chosen such that $J(z)\subset J$ for every $z\in L(J).$
Then $\mu(L(J)) \asymp h^{n+1}$ and $E_m(f,J)_1\leq E(f,z)
$ for $z\in L(J).$

Consider a series of decompositions
$ \dom=\bigcup\limits_{k=1}^{2^{nm}} J_k^m $
into $2^{-m}$-fat sets 
assuming that every succeeding decomposition is obtained by dividing every set of previous to  $2^n$ parts.

We define a function $T_m$ on $\dom$ assuming $T_m(z) = \P_{J_k^m}f(z)$
for $z\in J_k^m,$ where $\P_{J_k^m}$ is a polynomial projector defined in previous section. Then
\[
f = T_{m_0} + \sum\limits_{k=m_0}^{\infty} (T_{m+1}-T_m)
\]
and it is enough to prove that $\sum\limits_{k=m_0}^\infty\norm{T_{m+1}-T_m }_{L^p(\dom)}<\infty. $

For $z\in J^{m+1}_k$ we have
\begin{multline*}
\abs{T_{m+1}(z)-T_m(z)} = \abs{\P_{J_k^{m+1}}f(z) - \P_{J_k^m}f(z) }= \abs{\P_{J_k^{m+1}}\left(f(z) - \P_{J_k^m}f(z)\right) }\\ \lesssim \frac{1}{\sigma(J_k^{m+1})} \int_{J_k^{m+1}} \abs{f- \P_{J_k^m}f} d\sigma
 \lesssim 2^{nm}E(f,J_k^m)_1\\ \lesssim 2^{nm} \inf\{ E(f,z) : z\in L(J_k^m) \}.
\end{multline*}

Thus %$\rho(z) \asymp 2^{-m}$
\[
\norm{T_{m+1}-T_m}_{L^p(J_k^{m+1})}\lesssim 2^{-m(n+1)(1-1/p)} \norm{E(f,z)\rho(z)^{-(n+1)}}_{L^p(L(J_k^m))}.
\]
Consider $L_m = \{ z\in\Cn\setminus\Omega :
c_12^{-m}<\dist{z}{\Omega}<c_22^{-m}\}.$ Then $L(J^k_m)\subset L_m$ and
\[
\norm{T_{m+1}-T_m }_{L^p(\dom)}\lesssim 2^{-m(n+1) \left(1-{1}/{p}\right) } \norm{E(f,z)\rho(z)^{-(n+1)}}_{L^p(L_m)}
\]
Finally
\[
 \sum\limits_{m=m_0}^{\infty} \norm{ T_{m+1}-T_m }_{L^p(\dom)} \leq \norm{E(f,z)\rho(z)^{-(n+1)} }_{L^p(\OO)}<\infty.
 \]

Now we will construct a continuation of function $f$. Consider Whitney decomposition in $\OO,$ $\sum\limits_{k=1}^\infty\chi_k =1,\ \chi_k\in C_0^\infty(\OO),\
\abs{\grad{\chi_k}(z)}\lesssim\rho(z)^{-1}.$ Let
$z_k\in\supp\chi_k$ and $J_k = B(\Psi(z_k),c_1\rho(z)).$  Define
\[
\mathbf{f}(z) = \sum\limits_{k=1}^{\infty} \chi_k(z)\P_{J_k}f(z), \quad z\in\Cn\setminus\Omega.
\]
Note that for every polynomial $T(z)$ we have
\[
\mathbf{f}(z) = T(z) + \sum\limits_{k=1}^{\infty} \chi_k(z)(\P_{J_k}f(z) - T(z)), \quad z\in\Cn\setminus\Omega
\]
and
\[
\bar{\partial} \mathbf{f}(z) = \sum\limits_{k=1}^{\infty} (\P_{J_k}f(z) - T(z)) \bar{\partial}\chi_k(z), \quad z\in\Cn\setminus\Omega.
\]
Let $T = \P_{J(z)}f.$ Then $J_k\subset J(z)$ if
$\chi_k(z)\neq 0.$ Thus by estimate (\ref{P_est}) we get
\[
 \abs{\P_{J_k}f(z) - T(z) }= \abs{ \P_{J_k}(f-T)(z) } \lesssim \sigma(J(z))^{-1} E(f,z)\lesssim \rho(z)^{-n} E(f,z).
 \]
Also $\supp\f\subset\O_\eps$ and
\[
 \abs{ \bar{\partial} \mathbf{f}(z) } \lesssim  E(f,z)\rho(z)^{-(n+1)} \in L^p(\OO). 
 \]
This concludes the proof. 
%%%%%%%%%%%%%%
\end{proof}

\begin{corollary}\label{cor:LocCont}
 The continuation $\f$ of function $f$ in Theorem~\ref{thm:LocCont} satisfies an estimate $S_p(\f,r)\lesssim\omega_m(f,h)_p.$
\end{corollary}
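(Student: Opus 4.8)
The plan is to integrate the pointwise bound of Theorem~\ref{thm:LocCont} over the level set $\dom_r$ and to transcribe the resulting integral of local $L^1$-best-approximations into the $l^p$-sum that defines $\omega_m(f,h)_p$ at the comparable scale $h\asymp r$. On $\dom_r$ we have $\rho\equiv r$, so Theorem~\ref{thm:LocCont} gives
\[
S_p(\f,r)^p=\int_{\dom_r}\abs{\dbar\f(z)}^p\,d\sigma_r(z)\lesssim r^{-(n+1)p}\int_{\dom_r}E_m(f,B(\Psi(z),r/10))_1^p\,d\sigma_r(z).
\]
Since the normal projection $\Psi$ restricts to a bijection $\dom_r\to\dom$ with Jacobian $\asymp 1$ uniformly in small $r$, I would change variables $\zeta=\Psi(z)$, replacing $d\sigma_r(z)$ by $d\sigma(\zeta)$ and $B(\Psi(z),r/10)$ by $B(\zeta,r/10)$, so that the estimate is reduced to controlling $\int_{\dom}E_m(f,B(\zeta,r/10))_1^p\,d\sigma(\zeta)$.

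Next I would localize against one fixed $h$-decomposition $\{F_j\}$ of $\dom$ with $h\asymp r$ and $B(\xi_j,h/2)\subset F_j\subset B(\xi_j,h)$. For $\zeta\in F_j$ the quasitriangle inequality~\eqref{eq:QM} places the small quasiball $B(\zeta,r/10)$ inside a bounded enlargement $B(\xi_j,Cr)$; since best approximation decreases on subdomains, $E_m(f,B(\zeta,r/10))_1\le E_m(f,B(\xi_j,Cr))_1$, a quantity constant on $F_j$. Using $\sigma(F_j)\asymp r^n$ this gives
\[
\int_{\dom}E_m(f,B(\zeta,r/10))_1^p\,d\sigma(\zeta)\lesssim\sum_j\sigma(F_j)\,E_m(f,B(\xi_j,Cr))_1^p\lesssim r^n\sum_j E_m(f,B(\xi_j,Cr))_1^p.
\]

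Then I would calibrate the $L^1$-errors to $L^p$-errors by~\eqref{eq:E1Ep} on the quasiball $B(\xi_j,Cr)$ (of measure $\asymp r^n$), replacing each $E_m(f,B(\xi_j,Cr))_1$ by $r^{n(1-1/p)}E_m(f,B(\xi_j,Cr))_p$, and reassemble the sum into the modulus. The enlarged quasiballs $\{B(\xi_j,Cr)\}$ have bounded overlap, since their cores $F_j$ are disjoint and the centers are $\gtrsim r$-separated; a finite colouring therefore splits them into $O(1)$ pairwise-disjoint subfamilies, each of which I would complete to an honest $h$-decomposition of $\dom$. By Definition~\ref{df:om_poly}, and because enlarging the degree only lowers the error, $E_m(f,\cdot)_p\le E_{m-1}(f,\cdot)_p$, this yields $\sum_j E_m(f,B(\xi_j,Cr))_p^p\lesssim\omega_m(f,Cr)_p^p\asymp\omega_m(f,h)_p^p$, the last comparison using doubling of the modulus in the scale. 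Collecting the powers of $r$ coming from the weight $\rho^{-(n+1)}$, the volume factors $r^n$, and the $L^1$--$L^p$ passage is then intended to produce the asserted bound $S_p(\f,r)\lesssim\omega_m(f,h)_p$ with $h\asymp r$.

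The step I expect to be most delicate is exactly this bookkeeping of powers of $r$, together with the colouring/completion argument. Three contributions must be matched against each other: the weight $\rho^{-(n+1)}$ from Theorem~\ref{thm:LocCont}, the quasiball volume $\sigma(B)\asymp r^n$, and the calibration exponent $n(1-1/p)$ from~\eqref{eq:E1Ep}. Moreover the anisotropy of the quasimetric (Euclidean size $r^{1/2}$ in the $2n-2$ complex-tangential directions against size $r$ in the remaining direction) and the off-by-one between the degree $m$ used in the continuation and the degree $m-1$ in the modulus both influence the \emph{effective} scale, and I would track them with care so as to land on $\omega_m(f,h)_p$ at the single scale $h\asymp r$ rather than at a shifted scale. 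The second subtlety is that the modulus is defined through \emph{disjoint} decompositions, whereas the enlarged quasiballs overlap; it is the finite colouring followed by completion to genuine $h$-decompositions that legitimizes replacing the overlapping sum by $\omega_m(f,h)_p$.
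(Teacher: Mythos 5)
Your strategy is the paper's own: integrate the pointwise bound of Theorem~\ref{thm:LocCont} over $\dom_r$, calibrate $L^1$ against $L^p$ best approximations via (\ref{eq:E1Ep}), cover $\dom$ by quasiballs of radius $\asymp r$ with bounded overlap, and reassemble the resulting $l^p$ sum into the polynomial modulus (the paper does the $E_1\to E_p$ calibration before the covering step and you do it after, which is immaterial; the covering subtlety you flag is treated just as tersely in the paper, which picks a finite covering $J_k=\{\xi\in\dom : d(\xi,z_k)<r\}$ of multiplicity at most $5^n$ and directly bounds $\norm{\{E_m(f,J_k)_p\}}_{l^p}\lesssim\omega_m(f,10r)_p$).

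The genuine defect sits in the one step you single out as delicate and then leave unexecuted: the powers of $r$ do \emph{not} cancel. Collecting them from your own displays gives $-(n+1)$ from the weight, $+n\left(1-\frac{1}{p}\right)$ from (\ref{eq:E1Ep}), and $+\frac{n}{p}$ from the volume factor $\sigma(F_j)\asymp r^n$, for a total of $-1$; what your computation actually proves is
\[
S_p(\f,r)\lesssim \frac{\omega_m(f,Cr)_p}{r},
\]
and no care with the anisotropy of the quasimetric or the degree shift between $E_m$ and $E_{m-1}$ removes the residual $r^{-1}$. This is exactly where the paper's own proof ends ($S_p(\f,r)\lesssim\omega_m(f,10r)_p/r$), so the corollary as printed --- with $\omega_m(f,h)_p$, no division by $r$, and an unquantified $h$ --- is a misstatement rather than the true target. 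The factor $1/r$ is not a blemish but is what the application requires: in Theorem~\ref{thm:Aps_pa} the weight $r^{1-1/q-s}$ multiplies $S_p(\f,r)$ precisely so that $\omega_m(f,r)_p\, r^{-s-1/q}$, the Besov seminorm integrand, appears; compare also Corollary~\ref{cor:ContGlob}, where $S_p(\f,r)\lesssim 2^m E_{2^m}(f)_p$ carries the same $1/r$-type factor. By promising to ``land on $\omega_m(f,h)_p$ at the single scale $h\asymp r$'' you committed to an arithmetic cancellation that fails; the correct conclusion of your otherwise sound argument is the displayed bound with the $1/r$, and you should have discovered this discrepancy with the stated claim rather than asserting the bookkeeping closes.
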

\begin{proof}
Notice that by estimate (\ref{eq:E1Ep}) we have
\[
E(f,z) \leq \sigma(J(z))^{1-\frac{1}{p}}E_m(f,J(z))_p
\lesssim \rho(z)^{\left(1-\frac{1}{p}\right)n}E_m(f,J(z))_p,
\]
 where $J(z)=B(\Psi(z),\rho(z)/10).$ Then
\[
 S_p(\f,r) \lesssim r^{-n/p-1}\norm{ E_m(f,J(z))_p}_{L^p(\dom_r)}.
\]
We will estimate the integral on the right side. Quasiballs $J(z)$ cover $\dom$ and we can choose a finite covering
$J_k=\{\xi\in\dom : d(\xi,z_k)<r\}$ such that for every $z\in\dom_r$ the set $J(z)$ is a subset of $J_k$ for some $k$ and every point is covered not more than by $5^{n}$ sets. Consequently
\[
 \norm{E_m(f,J(z))}_{L^p(\dom_r)}\lesssim r^{n/p}\norm{\{ E_m(f,J_k)\}_{k=1}^N}_{l^p} \lesssim r^{n/p} \omega_m(f,10r)_p,
\]
and $S_p(\f,r)\lesssim \omega_m(f,10r)_p/r.$
\end{proof}

\subsection{Construction of local approximation from pseudoanalytic continuation.}
In the proof of Theorem~\ref{thm:ContGlob} we constructed a pseudoanalytic continuation by local approximations of a function. Assume now that we have some pseudoanalytic continuation $\f$ of function $f.$ We will construct a local polynomial approximations of $f$ on $\dom$ in terms of function $\f.$

Let $J\subset B(z_0,h/A)$ for some $z_0\in\dom$ and $h>0,$ where $A>0$ is a constant of quasimetric $d$ defined in (\ref{eq:QM}). We consider a polynomial of degree $\leq Cm$ 
\begin{equation}
P_J(z) = \int\limits_{d(\xi,z_0)>2h} \bar{\partial}\f(\xi)\wedge\omega(\xi,z) K^{loc}_m(\xi,z,z_0),
\end{equation}
where the kernel $K^{loc}_m$ is a polynomial in $z$ of degree $\leq Cm$ and is defined by following equation.
\begin{multline} \label{K_loc}
K(\xi,z) = \frac{1}{v(\xi,z)^n} = \frac{1}{\left(v(\xi,z_0)+\Delta\right)^n} =
\frac{1}{v(\xi,z_0)^n}\left( 1 +
\frac{\Delta}{v(\xi,z_0)}
\right)^{-n}\\
=\frac{1}{v(\xi,z_0)^n}\Bigg( \sum\limits_{k=0}^m C_n^k\frac{\Delta^k}{v(\xi,z_0)^k}
+
O\left(\frac{\abs{\Delta}}{d(\xi,z_0)}\right)^{m+1}\Bigg)\\
= K^{loc}_m(\xi,z,z_0) +
O\left(\frac{\abs{\Delta}^{m+1}}{d(\xi,z_0)^{m+n+1}}\right),
\end{multline}
where $\Delta =v(\xi,z)-v(\xi,z_0)=\scp{w(\xi,z)}{\xi-z}-\scp{w(\xi,z_0)}{\xi-z_0}.$

Note that for $d(\xi,z_0)>2h$ and $d(z,z_0)<h/A$ we have $d(\xi,z)\asymp d(\xi,z_0)$ and 
\begin{multline*}
\abs{\Delta}\leq\abs{\scp{w(\xi,z)-w(\xi,z_0)}{\xi-z}} + \abs{\scp{w(\xi,z_0)}{z-z_0}}\\
\lesssim\abs{z-z_0}\abs{\xi-z}+\abs{\scp{w(\xi,z_0)-w(z,z_0)}{z-z_0}}
+\abs{\scp{w(z,z_0)}{z-z_0}}\\
\lesssim \abs{z-z_0}\abs{\xi-z}+\abs{w(\xi,z_0)-w(z,z_0)}\abs{z-z_0}+d(z,z_0)\\
\lesssim d(z,z_0)^{1/2}d(\xi,z)^{1/2}+d(z,z_0)\lesssim d(z,z_0)^{1/2}d(\xi,z)^{1/2},
\end{multline*}
because
\begin{multline*}
\abs{w(\xi,z_0)-w(z,z_0)}\lesssim\abs{\partial\rho(\xi)-\partial\rho(z)}\\
+\sum\limits_{k,j=1}^n\left( \abs{P_{jk}(z_0,\xi)}\abs{\xi-z_0}+\abs{P_{jk}(z_0,z)}\abs{z-z_0}\right)\\
\lesssim\abs{\xi-z}+\abs{\xi-z_0}+\abs{z-z_0}\lesssim d(\xi,z)^{1/2}.
\end{multline*}
Consequently,
\begin{equation} \label{eq:ker_cont_m}
\abs{ K(\xi,z)- K^{loc}_m(\xi,z,z_0) }\lesssim
\frac{d(z,z_0)^{\frac{m+1}{2}}}{d(\xi,z)^{n+\frac{m+1}{2}}},\ d(\xi,z_0)
>2h,\ d(z,z_0)<h/A.
\end{equation}
This implies that for $z\in J$
\begin{multline} \label{eq:PolyByCont}
 \abs{ f(z) - P_J(z) }\\
  \lesssim \int\limits_{d(\xi,z_0)<2h}\abs{\bar{\partial}\mathbf{f}(\xi)} \frac{d\mu(\xi)}{d(\xi,z)^n} +
 \int\limits_{d(\xi,z_0)>2h}\abs{\bar{\partial}\mathbf{f}(\xi)} \frac{ h^{\frac{m+1}{2}} d\mu(\xi)}{d(\xi,z)^{n+\frac{m+1}{2}}}.
\end{multline}

\section{Pseudoanalytic continuation of holomorphic Besov functions\label{Cont_Aps}}
To prove two following theorems we use the Hardy inequality. Let $f$ be a positive function on $(0,\infty)$ and for $r\neq1$ define a function $F(x)$ as follows:
\begin{equation*}
    F(x) =  \left\{
  	\begin{array}{ll}
  	  \int\limits_0^{x} f(t) dt, & r>1; \\
 	   \int\limits_x^{\infty} f(t) dt, & r<1.
	\end{array}
\right.
\end{equation*}
Then for $1\leq p<\infty$ we have
\begin{equation} \label{ineq:hardy}
\int\limits_0^{\infty} x^{-r} F^p(x) dx <
\left(\frac{p}{\abs{r-1}}\right)^p \int\limits_0^{\infty} x^{-r}
(xf(x))^p dx.
\end{equation}
The proof and detailed discussion can be found in \cite{HLP52}.

\begin{theorem} \label{thm:Aps_pa}

Let $1\leq p,q\leq\infty,\ s>0$ and $f\in H^1(\Omega).$ Then $f\in
A^s_{pq}(\Omega)$ if and only if there exist a pseudoanalytic continuation $\f$ of function $f$ such that
\begin{equation} \label{eq:contcond}
\norm{S_p(\f,r)r^{1-1/q-s}}_{L^q[0,\eps]}<\infty.
\end{equation}

\end{theorem}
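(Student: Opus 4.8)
The plan is to prove the two directions of the equivalence separately, using the machinery already assembled in the previous sections. The forward direction (necessity) is essentially a packaging of the results of Section~6.2, while the converse (sufficiency) combines the kernel approximation estimate \eqref{eq:PolyByCont} with the Hardy inequality \eqref{ineq:hardy}.

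For the \emph{only if} direction, suppose $f\in A^s_{pq}(\O)$. Then by definition $f^*\in B^s_{pq}(\dom)$, so the polynomial modulus of smoothness satisfies $\norm{\omega_m(f,t)_p\,t^{-s-1/q}}_{L^q(0,\eps)}<\infty$ for any $m>s$. I would invoke Theorem~\ref{thm:LocCont} together with Corollary~\ref{cor:LocCont}, which produces a pseudoanalytic continuation $\f$ of $f$ whose $\bar\partial$-derivative obeys $S_p(\f,r)\lesssim\omega_m(f,10r)_p/r$. Substituting this bound into \eqref{eq:contcond} and performing the change of variables $r\mapsto 10r$ (a harmless rescaling absorbed into the implicit constant and the finite measure of the interval), the quantity $\norm{S_p(\f,r)\,r^{1-1/q-s}}_{L^q[0,\eps]}$ is controlled by $\norm{\omega_m(f,10r)_p\,r^{-1/q-s}}_{L^q[0,\eps]}$, which is finite precisely because $f^*\in B^s_{pq}$. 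This direction is therefore almost immediate given the earlier corollary.

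For the \emph{if} direction, assume a pseudoanalytic continuation $\f$ exists with \eqref{eq:contcond} finite. The goal is to recover polynomial approximations on fat sets. Fix a $h$-fat set $J\subset\dom$ and use the polynomial $P_J$ of Section~6.3, so that \eqref{eq:PolyByCont} bounds $\abs{f(z)-P_J(z)}$ by two integrals of $\abs{\bar\partial\f}$ against the Leray kernel — a local term over $d(\xi,z_0)<2h$ with kernel $d(\xi,z)^{-n}$, and a tail term over $d(\xi,z_0)>2h$ with the gain $h^{(m+1)/2}d(\xi,z)^{-n-(m+1)/2}$. I would take $L^p(\dom)$-norms over a $h$-decomposition $\{F_j\}$, and for each piece estimate $E_m(f,F_j)_p\le\norm{f-P_{F_j}}_{L^p(F_j)}$. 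Using Minkowski's inequality and the kernel estimates of Lemma~\ref{LerayEst} (the $\delta^{-\alpha}$ decay for the tail and the logarithmic bound for the local part), the $l^p$-norm $\norm{\{E_{m-1}(f,F_j)_p\}}_{l^p}$ should be dominated by a dyadic-type sum of the slice norms $S_p(\f,r)$ over the annuli $\rho(\xi)\asymp h$ and its coarser/finer companions. Assembling these slice contributions gives $\omega_m(f,h)_p$ controlled by an integral average of $S_p(\f,r)$.

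The main obstacle will be the last assembly step: converting the pointwise bound \eqref{eq:PolyByCont} into a clean estimate of $\omega_m(f,t)_p$ and then of $c_{pq}(f^*)=\norm{\omega_m(f,t)_p\,t^{-s-1/q}}_{L^q}$. The difficulty is that the tail integral couples all scales $r>h$, so after taking norms one obtains a convolution-type inequality relating $\omega_m(f,h)_p$ to $\int_h^\eps S_p(\f,r)(h/r)^{(m+1)/2}\,\tfrac{dr}{r}$ (plus a local contribution from $r<h$). This is exactly where the Hardy inequality \eqref{ineq:hardy} enters: choosing $m$ large enough that $(m+1)/2>s$ makes the kernel $(h/r)^{(m+1)/2}$ summable against the weight $r^{-s-1/q}$, and \eqref{ineq:hardy} (in both its $r>1$ and $r<1$ forms, for the tail and local pieces respectively) converts the convolution bound on $\omega_m$ into the finiteness of $c_{pq}(f^*)$, controlled by \eqref{eq:contcond}. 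Care is needed to track the measure-theoretic relationship $\mu(V(\xi,\delta))\lesssim\delta^{n+1}$ so that the passage from integrals over the shell $\OO$ to the slice norms $S_p(\f,r)=\norm{\bar\partial\f}_{L^p(\dom_r)}$ introduces the correct power of $r$; this bookkeeping, rather than any conceptual gap, is the technical heart of the argument.
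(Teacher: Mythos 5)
Your proposal is correct and follows essentially the same route as the paper: the direction from $f\in A^s_{pq}(\Omega)$ to the continuation is exactly Corollary~\ref{cor:LocCont}, and the converse uses the local polynomials $P_J$ via \eqref{eq:PolyByCont}, sliced into the operators $g_r,h_r$ whose $L^p(\dom)$ norms are bounded through Lemma~\ref{LerayEst} (the paper interpolates between $p=1$ and $p=\infty$ by Riesz--Thorin where you invoke Minkowski, an inessential difference), followed by the Hardy inequality with $(m+1)/2>s$. No substantive gaps.
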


\begin{proof} The sufficiency is satisfied due to Corollary~\ref{cor:LocCont} of the Theorem~\ref{thm:LocCont} and the definition of Besov spaces.
%%%%%%%%%%%%%%%
To prove necessity assume that there exist a continuation $\f$ that satisfies  condition (\ref{eq:contcond}) and that $\supp{\f}\subset\O_\eps.$ 
Then by estimate (\ref{eq:PolyByCont}) we have
\[
	\omega_m(f,c\delta)_p \lesssim \norm{g}_{L^p(\dom)} + \norm{h}_{L^p(\dom)}
\]
for some $c>0,$ where
\begin{align*}
& g(z) = \int\limits_{d(\xi,z)<2\delta} \frac{ 
\abs{\bar{\partial} \f(\xi)} }{ d(\xi,z)^n } d\mu(\xi);\ 
h(z) = \int\limits_{d(\xi,z)>\delta}\frac{\abs{\bar{\partial} \f(\xi)}
\delta^{\frac{m+1}{2}}}{d(\xi,z)^{n+\frac{m+1}{2}}}
d\mu(\xi),\ z\in\dom.
\end{align*}

We introduce the following functions
\begin{align*}
 g_r(z) = \int\limits_{\substack{\xi\in\dom_r\\ d(\xi,z)<2\delta}} \frac{ \abs{\bar{\partial} \f(\xi)} }{ d(\xi,z)^n } d\sigma_r(\xi);\ 
 h_r(z) = \int\limits_{\substack{\xi\in\dom_r\\ d(\xi,z)>\delta}}\frac{\abs{\bar{\partial} \f(\xi)}
\delta^{\frac{m+1}{2}}}{d(\xi,z)^{n+\frac{m+1}{2}}}
d\sigma_r(\xi), z\in\dom.
\end{align*}
Then
\[
 \omega_m(f,c\delta)_p \lesssim \int\limits_{0}^{2\delta}\norm{g_r}_{L^p(\dom)} dr + \int\limits_{0}^{1}\norm{h_r}_{L^p(\dom)} dr. 
\]
We will prove that
\begin{align*}
& \norm{g_r}_{L^p(\dom)} \lesssim S_p(\f,r) \log{\frac{\delta}{r}},\quad 0<r<2\delta;\\
& \norm{h_r}_{L^p(\dom)} \lesssim S_p(\f,r) ,\quad 0<r<\delta;\quad
\norm{h_r}_{L^p(\dom)} \lesssim S_p(\f,r)
\frac{\delta^{\frac{m+1}{2}}}{r^{\frac{m+1}{2}}} ,\quad r>\delta.
\end{align*}
By Riesz-Thorin interpolation theorem it is enough to prove these estimates for $p=1$ and $p=\infty.$ By Lemma~\ref{LerayEst} for $r>\delta$ we have
\[
\norm{h_r}_{L^1(\dom)} = \int\limits_{\dom_r} \abs{\bar{\partial} \f(\xi)} d\sigma(\xi) \int\limits_{\substack{z\in\dom\\ d(\xi,z)>\delta}}\frac{\delta^{\frac{m+1}{2}}d\sigma(z)}{d(\xi,z)^{n+\frac{m+1}{2}}} \lesssim S_1(\f,r) \frac{\delta^{\frac{m+1}{2}}}{r^{\frac{m+1}{2}}},
\]
 for  $p=1$ and for $p=\infty$
\[
\abs{h_r(z) } \lesssim S_\infty(\f,r) \int\limits_{\substack{z\in\dom\\ d(\xi,z)>\delta}}\frac{\delta^{\frac{m+1}{2}}d\sigma(z)}{d(\xi,z)^{n+\frac{m+1}{2}}} \lesssim  S_\infty(\f,r)   \frac{\delta^{\frac{m+1}{2}}}{r^{\frac{m+1}{2}}},\ z\in\dom,
\]
which concludes the desired estimate. Second estimate of function $h_r$ and estimate of function $g_r$
can be proven analogously.

Thus for every $\eps>0,\ m\in\N$ 
\begin{equation} \label{eq:om_constr}
 \omega_m(f,c\delta)_p \lesssim \int\limits_{0}^{2\delta} S_p(\f,r) \frac{\delta^\eps}{r^\eps} dr + \delta^{\frac{m+1}{2}}\int\limits_{\delta}^{1} S_p(\f,r) \frac{dr}{r^{\frac{m+1}{2}}}.
\end{equation}
Assume that $1\leq q<\infty.$ We will use Hardy inequality to prove an estimate
\begin{equation*}
 \int\limits_0^\eps\omega_m(f,\delta)_p^q \delta^{-qs-1}d\delta < \infty.
\end{equation*}
Consider the first term in
(\ref{eq:om_constr}) letting $\eps=\frac{s}{2}$ and
$
F_1(\delta) = \int\limits_0^\delta S_p(\f,r) \frac{dr}{r^\eps}.
$ 
Then applying Hardy inequality (\ref{ineq:hardy}) with $r=1+sq>1$ we have
\begin{multline*}
\int\limits_0^1 (\delta^\eps F_1(\delta))^q \delta^{-1-sq} d\delta \lesssim \int\limits_0^1 r^{-1-sq+\eps q} \left(S_p(\f,r) r^{-\eps}r \right)^q dr\\ = \int\limits_0^1 \left(\frac{S_p(\f,r)}{r^{s-1}}\right)^q \frac{dr}{r}<\infty.
\end{multline*}
Analogously we can estimate the second term in
(\ref{eq:om_constr}) letting $s<(m+1)/2.$

For $q=\infty$ we have $S_p(\f,r)\lesssim r^{s-1}$ and
\[
\omega_m(f,c\delta) \lesssim \delta^\eps\int_0^{2\delta}r^{s-1-\eps}dr+\delta^{\frac{m+1}{2}}\int_\delta^\eps r^{s-1-\frac{m+1}{2}}dr
\lesssim \delta^s.
\]
This concludes the proof of the theorem.
\end{proof} 

\section{Main theorem.}

\begin{theorem} \label{thm:ConstrCond}

Let $1\leq p,q\leq\infty,\ s>0,$ and $f\in H^p(\dom).$ Then $f\in
A^s_{pq}(\Omega)$ if and only if $\left\{2^{ms}E_m(f)_{p}\right\}_{m=1}^{\infty}\in l^q.$
\end{theorem}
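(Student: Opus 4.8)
The plan is to prove Theorem~\ref{thm:ConstrCond} by establishing equivalence between the polynomial-approximation condition $\{2^{ms}E_m(f)_p\}\in l^q$ and the pseudoanalytic-continuation condition $\norm{S_p(\f,r)r^{1-1/q-s}}_{L^q[0,\eps]}<\infty$ from Theorem~\ref{thm:Aps_pa}, which already characterizes membership in $A^s_{pq}(\O)$. Since Theorem~\ref{thm:Aps_pa} converts the Besov membership into a condition on some pseudoanalytic continuation, it suffices to show that the approximation condition is equivalent to the existence of a continuation with controlled $S_p(\f,r)$. First I would reduce the dyadic sequence condition to the continuous integral condition by the standard monotonicity of best-approximation numbers: since $E_m(f)_p$ decreases in $m$, the quantity $\{2^{ms}E_{2^m}(f)_p\}_{m}\in l^q$ is equivalent to $\{2^{ms}E_m(f)_p\}_m\in l^q$ up to constants, so throughout I work with dyadic scales $r\asymp 2^{-m}$.

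For the \emph{necessity} direction (Besov $\Rightarrow$ approximation), I would invoke Theorem~\ref{thm:Aps_pa} to obtain a pseudoanalytic continuation $\f$ satisfying \eqref{eq:contcond}, then use the local-approximation construction of Section~6 (equation \eqref{eq:PolyByCont}) together with Lemma~\ref{lm:KGlob} to build a \emph{global} polynomial approximant of degree $\lesssim m$. The key point is that $P_J$ in \eqref{eq:PolyByCont} is only a local approximation on a fat set; to pass to a genuine global polynomial $E_m(f)_p$, I would replace the exact kernel $K(\xi,z)$ by its global polynomial surrogate $K^{glob}_m(\xi,z)$ and plug it into the reproducing formula \eqref{CLF_co}. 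Splitting the resulting error integral over the regions $d(\xi,z)\gtrsim 1/m$ and $d(\xi,z)\lesssim 1/m$, applying the kernel estimates \eqref{eq:CLF_approx1}--\eqref{eq:CLF_approx2} and the integral bounds of Lemma~\ref{LerayEst}, and then interpolating between $p=1$ and $p=\infty$ via Riesz--Thorin as in the proof of Theorem~\ref{thm:Aps_pa}, should yield $E_m(f)_p \lesssim \sum_{r\text{-dyadic}} (\text{weighted }S_p(\f,r))$, whence the $l^q$-summability follows by a Hardy-type/discrete-convolution argument.

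For the \emph{sufficiency} direction (approximation $\Rightarrow$ Besov), I would run the global pseudoanalytic continuation of Subsection~\ref{Cont_glob}. Given $\{2^{ms}E_m(f)_p\}\in l^q$, choose polynomials $P_{2^m}$ with $\norm{f^*-P_{2^m}}_{L^p(\dom)}\lesssim E_{2^m}(f)_p$, form $\lambda(z)=\rho(z)^{-1}\abs{P_{2^{m+1}}-P_{2^m}}$, and apply Theorem~\ref{thm:ContGlob}. Corollary~\ref{cor:ContGlob} then gives $S_p(\f,r)\lesssim 2^m E_{2^m}(f)_p$ on $2^{-m}\le r\le 2^{-m+1}$; the crucial ingredient ensuring $\lambda\in L^p$ is Lemma~\ref{lm:Pm}, which controls the $L^p(\dom_r)$ norm of a degree-$2^m$ polynomial by its norm on $\dom$ uniformly in $m$. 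Substituting this bound into the weighted $L^q$ integral and discretizing, $\norm{S_p(\f,r)r^{1-1/q-s}}_{L^q}^q \lesssim \sum_m (2^{ms}E_{2^m}(f)_p)^q <\infty$, so \eqref{eq:contcond} holds and Theorem~\ref{thm:Aps_pa} delivers $f\in A^s_{pq}(\O)$.

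The main obstacle I anticipate is the necessity direction: converting the purely \emph{local} polynomial approximant $P_J$ of \eqref{eq:PolyByCont}, which only approximates on a single fat set, into a \emph{global} polynomial of degree $\lesssim m$ whose error controls $E_m(f)_p$ over all of $\dom$. This requires the global kernel approximation of Lemma~\ref{lm:KGlob} rather than the local expansion \eqref{K_loc}, and care must be taken that the degree of $K^{glob}_m(\xi,\cdot)$ stays $O(m)$ while the error integrals remain summable after interpolation; the interplay between the logarithmic divergence in the near-diagonal estimate of Lemma~\ref{LerayEst} and the gain $1/m^\alpha$ in \eqref{eq:CLF_approx1} is where the estimates are most delicate, and choosing $\alpha>s$ appropriately is essential for the final Hardy-inequality step to close.
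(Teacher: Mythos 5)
Your proposal follows essentially the same route as the paper: sufficiency via Theorem~\ref{thm:ContGlob}, Lemma~\ref{lm:Pm} and Corollary~\ref{cor:ContGlob} feeding into Theorem~\ref{thm:Aps_pa}, and necessity via the global polynomial kernel $K^{glob}_m$ of Lemma~\ref{lm:KGlob} substituted into the continuation formula, with the error split near/far from the diagonal, estimated slice-by-slice using Lemma~\ref{LerayEst} and Riesz--Thorin, and summed by Hardy's inequality. You correctly identify that the local approximant of \eqref{eq:PolyByCont} must be replaced by the global kernel here, and your remarks on the dyadic reduction and on choosing $\alpha$ large match what the paper's argument requires.
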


\begin{proof}

Assume that $\left\{2^{ms}E_{2^m}(f)_{p}\right\}_{m=1}^{\infty}\in l^q.$
By Corollary~\ref{cor:ContGlob} of Theorem~\ref{thm:ContGlob} we have a pseudoanalytic continuation such that
$$S_p(\f,r)\lesssim 2^mE_{2^m}(f)_p. $$
Then
\begin{equation}\label{ConstrCond1}
 \norm{{S_p(\f,r)}{r^{-s-1/q+1}}}_{L^q(0,\eps)} \lesssim \norm{ \{ {2^{ms}} E_{2^m}(f)_{p}\}_{m=1}^\infty}_{l^q}<\infty,
\end{equation}
and by Theorem~\ref{thm:Aps_pa} $f\in A^s_{pq}(\Omega).$

Conversely, let $f\in A^s_{pq}(\Omega).$ Then there exist a pseudoanalytic continuation $\f$ of function $f$ satisfying estimate (\ref{eq:contcond}). We will prove that
\begin{equation}\label{eq:Em}
 E_m(f)_p \lesssim \int\limits_0^{1/m} S_p(\f,r) \frac{dr}{(mr)^{\eps}} dr +\int\limits_{1/m}^\infty S_p(\f,r) \frac{dr}{(mr)^{\alpha}},
\end{equation}
where the parameter $\alpha$ can be chosen arbitrary great and $\eps>0$ is small enough. We construct polynomials approximating a function $f$ using an approximation of a kernel $K(\xi,z)$ from Lemma~\ref{lm:KGlob}. Let 
\[
  P_m(z) = \int_{\CO}
\bar{\partial} \f(\xi)\wedge\omega(\xi) K^{glob}_m(\xi,z).
\]
 Then
\begin{multline*}
 \abs{f(z)-P_m(z)} \lesssim \int_{\CO} \abs{\bar{\partial}\f(\xi)} \abs{K(\xi,z) - K^{glob}_m(\xi,z) } d\mu(\xi)\\ 
 \lesssim  \int\limits_{d(\xi,z)<1/m} \frac{\abs{\bar{\partial} \f(\xi)}}{d(\xi,z)^n}d\mu(\xi) +  \int\limits_{\substack{d(\xi,z)<1/m,\\ \rho(\xi)<1/m}} \frac{\abs{\bar{\partial}\f(\xi)}}{m^\alpha d(\xi,z)^{n+\alpha}}d\mu(\xi)\\ + \int\limits_{\rho(\xi)>1/m}
\frac{\abs{\bar{\partial}\f(\xi)}}{m^\alpha d(\xi,z)^{n+\alpha}}d\mu(\xi)
 = U_m(z) + V_m(z) + W_m(z).
\end{multline*}

Using notations of Theorem~\ref{thm:Aps_pa} we get
\[
	U_m(z)\lesssim \int\limits_0^{1/m} dr \int_{\dom_r} \frac{\abs{\bar{\partial}\f(\xi)}d\sigma_r(\xi)}{d(\xi,z)^n} =\int\limits_0^{1/m} g_r(z) dr.
\]
Then $\norm{g_r}_{L^p(\dom)} \lesssim
S_p(\f,r)\log{\frac{2}{mr}},\ 0<r<1/m,$ and by Minkovsky integral inequality
\[
	\norm{U_m}_{L^p(\dom)} \lesssim \int\limits_0^{1/m} S_p(\f,r)\log{\frac{2}{mr}}\ dr\lesssim\int\limits_0^{1/m} S_p(\f,r)\frac{dr}{(mr)^\eps}
\]
for every $\eps>0.$ Analogously
\[
	\norm{V_m}_{L^p(\dom)} \lesssim \int\limits_0^{1/m} S_p(\f,r)\ dr ;\
	\norm{W_m}_{L^p(\dom)} \lesssim \int\limits_{1/m}^\infty S_p(\f,r)
\frac{dr}{(mr)^{\alpha}},
\]
which concludes the proof of (\ref{eq:Em}).

As in the proof of Theorem~\ref{thm:Aps_pa} the final estimate for $1\leq q<\infty$ follows from Hardy inequality (\ref{ineq:hardy}).  
Let $F(t) =
\int\limits_0^{t} S_p(\f,r)\frac{dr}{r^\eps}$ and $\eps<s.$ Then by monotonicity of function $F$
\begin{multline*}
 \sum\limits_{m=1}^\infty 2^{msq}\norm{U_m}_{L^p(\dom)}\lesssim \sum\limits_{m=1}^\infty 2^{msq} \left(\int\limits_0^{2^{-m}} S_p(\f,r)\frac{dr}{(2^mr)^\eps}\right)^q\\
 =\sum\limits_{m=1}^\infty 2^{m(s-\eps)q}F(2^{-m}) \lesssim \int\limits_{0}^{\infty} t^{(\eps-s)q-1} F(t)^q dt  \\
\lesssim\int\limits_{0}^{\infty} r^{(\eps-s)q-1} ( S_p(\f,r) r^{1-\eps})^q
dr = \int\limits_{0}^{\infty} S_p(\f,r)^q r^{-q(s-1)-1} dr<\infty.
\end{multline*}
Analougously we get a second estimate for $V_m$ and $W_m.$

For $q=\infty$ we have $S_p(\f,r)\lesssim r^{s-1}$ and $E_m(f)_p\lesssim m^{-s}$ by estimate (\ref{eq:Em}).
This concludes the proof of the theorem.
\end{proof}

\end{document}